\newtheorem{theorem}{Theorem}[section]
\newtheorem{corollary}[theorem]{Corollary}
\newtheorem{lemma}[theorem]{Lemma}
\newtheorem{proposition}[theorem]{Proposition}
\theoremstyle{definition}
\newtheorem{definition}[theorem]{Definition}
\theoremstyle{remark}
\newtheorem{remark}[theorem]{\sc Remark}
\theoremstyle{remark}
\theoremstyle{remark}
\theoremstyle{remark}
\theoremstyle{remark}
\theoremstyle{remark}
\renewcommand{\Box}{\square}    
\renewcommand{\Bbb}{\mathbb}
\newcommand{\cal}{\mathcal}
\renewcommand{\int}{{\rm{int}}}
\newcommand{\Sing}{{\rm{Sing}}}
\renewcommand{\ker}{\mathop{{\rm{ker}}}\nolimits}
\newcommand{\cl}{{\rm{closure}}}
\newcommand{\grad}{\mathop{\rm{grad}}\nolimits}
\newcommand{\ity}{{\infty}}
\renewcommand{\d}{{\rm{d}}}
\newcommand{\fin}{\hspace*{\fill}$\Box$}
\newcommand{\m}{\setminus}
\newcommand{\cC}{{\cal C}}
\newcommand{\cW}{{\cal W}}
\newcommand{\bC}{{\Bbb C}}
\newcommand{\bP}{{\Bbb P}}
\newcommand{\bN}{{\Bbb N}}
\newcommand{\bX}{{\Bbb X}}
\newcommand{\bY}{{\Bbb Y}}
\newcommand{\C}{{\Bbb C}}
\newcommand{\bx}{\mathbf{x}}
\newcommand{\bt}{\boldsymbol{t}}
\begin{document}

\title[Detecting asymptotic non-regular
values]{Detecting asymptotic non-regular
values by polar curves}

\author{Zbigniew Jelonek}
\address{Instytut Matematyczny,
Polska Akademia Nauk,
\'Sniadeckich 8, 00-956 Warszawa\\
Poland.} \email{najelone@cyf-kr.edu.pl}

\author{Mihai Tib\u ar}
\address{Math\' ematiques, UMR 8524 CNRS,
Universit\'e de Lille 1, \  59655 Villeneuve d'Ascq, France.}
\email{tibar@math.univ-lille1.fr}

\date{\today}
\keywords{polar curve, bifurcation locus, Malgrange non-regular values, effectivity}
\subjclass[2010]{32S05, 32S50, 14D06 14Q20, 58K05, 14P10}
\thanks{The authors acknowledge the support of the Labex CEMPI
(ANR-11-LABX-0007-01) at Lille, and of the CIRM at Luminy through the RIP program. The first author was partially
supported by the NCN grant 2013/09/B/ST1/04162,
 2014-2017.}

\begin{abstract}
 We locate the Malgrange non-regular values of a given polynomial function $f:\bC^n \to \bC$ by using a series of affine polar curves. We moreover show that all
non-trivial Malgrange non-regular values of $f$ are indicated by a single
 ``super-polar curve'' which we introduce here, providing also an effective algorithm of detection.

\end{abstract}

\maketitle
\setcounter{section}{0}

\section{Introduction}\label{s:intro}
Let $f:\bC^n \to \bC $ be a polynomial of degree $d\ge 2$.  Ren\' e
Thom proved that $f$ is a $C^\infty-$fibration outside a finite
set, where the smallest such set is called {\it the bifurcation set of}
$f$ and is  denoted by $B(f)$. Roughly
speaking the set $B_\infty (f)$ consists of points at which $f$ is
not a locally trivial fibration at infinity (i.e., outside a large
ball). Two
fundamental questions appear in a natural way: how to characterize the set $B(f)$ and how to
estimate the number of points of this set.

Let us recall that the set $B(f)$ contains
the set $f(\Sing f)$ of critical values of $f$ and the
set $B_\infty (f)$ of bifurcations points at infinity.

In case $n=2$ there are well-known criteria to detect $B(f)$, see
e.g. \cite{Ti-reg}, \cite{Du}, and there are also
estimations of the upper bound of the number $\# B(f)$ in terms of
the degree or other data \cite{Ha}, \cite{Ha1},  \cite{jel6},
 \cite{LO}, \cite{gw},  \cite{JT} etc.

Whenever $n>2$ one has no exact criteria but one defines
regularity conditions at infinity that each yield some finite set
of values containing $B(f)$ and thus approaching the problem of
estimating $\# B(f)$. To control the set $B_\infty (f)$ one can
use the set of {\it asymptotic critical values of} $f$:
\[K_\infty (f) :=\{ y \in \bC \mid \exists \ (x_l)_{l\in \bN},
\| x_l\| \rightarrow\infty, \ \mbox{such\ that} \ f(x_l)\rightarrow y \ \mbox{and} \ \Vert
x_l\Vert \Vert d f(x_l)\Vert\rightarrow 0\}.\]
 If $c\notin
K_\infty (f)$, then it is  usual to say that $f$ satisfies {\it
Malgrange's condition}  at $c$ ( or $c$ is Malgrange regular). The
set
 $K_\infty(f)$ naturally decomposes into two pieces: the set $TK_\infty(f)$ of trivial
 Malgrange non-regular values which come from the critical
 points of $f$ (i.e.  there  is  a  sequence
$x_l\rightarrow\infty$  such that $x_l\in \Sing(f)$ and $\
f(x_l)\rightarrow y$) and the remaining set $NK_\infty(f) := K_\infty (f)\m TK_\infty(f)$ of
non-trival Malgrange  non-regular values. Of course
$TK_\infty(f)\subset f(\Sing f)$. Since the set $f(\Sing f)$ is relatively
easy to compute, the problem which remains
 is how to compute the set $NK_\infty(f)$.

It was proved (cf \cite{par1}, \cite{JK-crelle}, \cite{jel4}) that one has the inclusion
$B_\infty (f)\subset K_\infty (f)$. Setting $K(f):=f(\Sing f)\cup K_\infty
(f)$,  we get the inclusion $B(f)\subset K(f)$. Estimations of the number of the
Malgrange non-regular values have been given in \cite{JK-crelle}. An algorithmic method for recovering
the set $K_\infty(f)$ has been produced more recently \cite{JK}.

We present here two new methods for detecting
$K_\ity(f)$ and for estimating the number $\# K_\ity(f)$, together with an effective algorithm.
 Our first approach,  based on the use of  a series of \emph{polar curves} and their relation to Malgrange non-regularity via the $t$-regularity,  yields an exact characterisation of the set $NK_\ity(f)$.
 We shall recall the notions and the relevant preliminary results in \S \ref{s:reg}, but let us introduce already here our first main statement.

 Let $\{x_1, \ldots , x_n\}$ be a {\em generic system of coordinates} of $\bC^n$, after Definition \ref{d:polar}. Let us consider the successive restrictions of $f$ to the affine hyperplanes:
\[ f_0 := f, \ \  f_1 := f_{| x_1=0}, \   \ldots , \  f_{n-2} := f_{| x_1=\cdots = x_{n-2}=0}, \]
 and the corresponding generic polar curves $\Gamma (x_i,f_{i-1})$, for $i=1, \ldots , n-1$.

For a  mapping $g: X\to Y$, let $J_g$ denote the \emph{non-properness set} \cite{jel}, \cite{jel1}
(also called the \emph{Jelonek set}) of the mapping $g$, see   \S \ref{s:noether}, Theorem  \ref{setSF}. If
$A\subset X$, then by $J_g(A)$ we denote the non-properness set of
the restriction $g_{|A}$.

We say that an irreducible algebraic variety
$S\subset \bC^n$ is {\it horizontal} if  $f(S)$ is not a
point (i.e. $S$ is not included in some fibre of $f$). The union of all horizontal components of the polar curve
$\Gamma (x_i,f_{i-1})$ will be called the {\it horizontal part} and will be denoted by $H\Gamma (x_i,f_{i-1})$.

We prove the following characterisation of the set $NK_\ity(f)$:

\begin{theorem}\label{t:1}
The set $NK_\infty(f)$ of non-trivial Malgrange non-regular
regular values of $f$ is included in the union of the
non-properness sets of the mapping $f$ restricted to a horizontal
part of the polar curves $\Gamma (x_i,f_{i-1})$, more precisely we
have the equality:
\begin{equation}\label{eq:union}
 NK_\ity(f) = \bigcup_{i=1}^{n-1} J_f(H\Gamma (x_i,f_{i-1})) \setminus J_f(\Sing f).
\end{equation}
\end{theorem}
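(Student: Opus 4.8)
The plan is to route everything through the notion of $t$-regularity and its two preliminary properties recalled in \S\ref{s:reg}: that a value $c$ is Malgrange-regular if and only if it is $t$-regular, and that $t$-regularity at a point of the hyperplane at infinity can be tested by a polar curve in a transverse coordinate direction. Writing $H_\infty \cong \bP^{n-1}$ for the hyperplane at infinity of $\bC^n$, I would stratify it by the flag of coordinate subspaces $\{x_1 = 0\} \supset \{x_1 = x_2 = 0\} \supset \cdots$ fixed by the generic system of coordinates of Definition~\ref{d:polar}, so that every $a \in H_\infty$ lies in a unique open stratum $\{x_1 = \cdots = x_{i-1} = 0,\ x_i \neq 0\}$ for some $i \in \{1,\dots,n-1\}$ (the remaining point $[0:\cdots:0:1]$ being excluded by genericity of the coordinates). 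On such a stratum the direction $x_i$ is transverse to $H_\infty$ inside the subspace $\{x_1 = \cdots = x_{i-1} = 0\}$, while $x_1, \dots, x_{i-1}$ are not; since $f$ restricted to this subspace is precisely $f_{i-1}$, the $i$-th polar curve $\Gamma(x_i, f_{i-1})$ is the natural object testing $t$-regularity of $f_{i-1}$ there, and $J_f(H\Gamma(x_i, f_{i-1})) = J_{f_{i-1}}(H\Gamma(x_i, f_{i-1}))$ is the finite set of values over which a horizontal branch of this curve escapes to infinity. Finally I would use the identity $TK_\infty(f) = J_f(\Sing f)$, which holds because both sides are exactly the values $y$ reached at infinity by a sequence of singular points of $f$.

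For the inclusion ``$\supseteq$'', take $c \in J_f(H\Gamma(x_i, f_{i-1}))$ with $c \notin J_f(\Sing f)$. By definition of the non-properness set there is a horizontal branch of $\Gamma(x_i, f_{i-1})$ along which $\|x\| \to \infty$ and $f \to c$; let $a \in H_\infty$ be its limit point, necessarily in the stratum $\{x_1 = \cdots = x_{i-1} = 0,\ x_i \neq 0\}$. The polar criterion of \S\ref{s:reg} then shows that $f_{i-1}$ is non-$t$-regular at $a$, and the restriction step described below upgrades this to non-$t$-regularity of $f$ itself at $a$; hence $c$ is Malgrange non-regular, i.e. $c \in K_\infty(f)$. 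Since $c \notin J_f(\Sing f) = TK_\infty(f)$, the value $c$ is non-trivial and therefore lies in $NK_\infty(f)$.

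For the reverse inclusion ``$\subseteq$'', take $c \in NK_\infty(f)$. Then $c \in K_\infty(f)$, so $c$ is non-$t$-regular and there is a point $a \in H_\infty$ in the fibre over $c$ at which $t$-regularity of $f$ fails; moreover $c \notin TK_\infty(f) = J_f(\Sing f)$. Placing $a$ in its open stratum $\{x_1 = \cdots = x_{i-1} = 0,\ x_i \neq 0\}$, the restriction step shows that $f_{i-1}$ is also non-$t$-regular at $a$, and the polar criterion then produces a horizontal branch of $\Gamma(x_i, f_{i-1})$ accumulating at $a$ with $f \to c$ along it. This is exactly the statement $c \in J_f(H\Gamma(x_i, f_{i-1}))$, so $c$ belongs to the right-hand side of (\ref{eq:union}).

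The main obstacle is the restriction step used in both directions: for a generic system of coordinates one must prove that, at a point $a$ of the transverse stratum $\{x_1 = \cdots = x_{i-1} = 0,\ x_i \neq 0\}$, the failure of $t$-regularity of $f$ is equivalent to the failure of $t$-regularity of its section $f_{i-1}$. I expect this to require a transversality argument of L\^e--Teissier type, controlling the behaviour of the relative conormal and of the polar curve under successive generic hyperplane sections, so that restricting to $\{x_1 = \cdots = x_{i-1} = 0\}$ neither creates nor destroys asymptotic non-regularity inside that subspace. One must also check that the flag of $n-1$ polar curves reaches every relevant stratum of $H_\infty$: the genericity of the coordinates excludes the single point $[0:\cdots:0:1]$, and the final curve $\Gamma(x_{n-1}, f_{n-2})$ handles the bottom stratum, where $f_{n-2}$ is a function of the two variables $x_{n-1}, x_n$ and the polar analysis is complete.
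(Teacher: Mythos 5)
Your inclusion ``$\supseteq$'' is essentially sound, and your identification $TK_\infty(f)=J_f(\Sing f)$ is correct: that direction uses only the implication that $t$-regularity of $f$ at a point at infinity descends, for a generic hyperplane section, to $t$-regularity of the restriction at that point (in the paper this is deduced from Thom $(a_{x_0})$-regularity of a Whitney stratification of $\bX$, a genericity condition stronger than the one in Definition \ref{d:polar}), taken in contrapositive form. The genuine gap is in ``$\subseteq$'', which you route through a ``restriction step'' asserted as an \emph{equivalence}: failure of $t$-regularity of $f$ at $a$ iff failure of $t$-regularity of $f_{i-1}$ at $a$. This equivalence is false; only the direction just described holds. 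Generic slicing does not preserve non-regularity pointwise: the trace at infinity of a generic hyperplane $\{x_1=0\}$ \emph{misses} every isolated point of $\Sing^\ity f$ and cuts the positive-dimensional components transversally, so non-regularity at individual points is destroyed by slicing and can only be tracked value-wise.

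The flag-stratum assignment of the index $i$ is where the argument concretely breaks. Precisely because the coordinates are generic, a positive-dimensional component $C$ of the $t$-singular locus $\bX^\ity\cap\bt^{-1}(c)\cap\Sing^\ity f$ is \emph{not} contained in $\{x_1=0\}$, so its generic points lie in your top stratum $\{x_1\neq 0\}$ and your scheme routes them to $\Gamma(x_1,f)$. But Theorem \ref{t:polar}(b), which you invoke to produce a horizontal branch accumulating at such a point, carries the hypothesis that the point is an \emph{isolated} $t$-singularity; for non-isolated ones $\overline{\Gamma(x_1,f)}$ may miss $C$ entirely and can even be empty, as in the paper's example $f=x+x^2y$, where the polar curve is empty while $NK_\infty(f)=\{0\}$ --- the very phenomenon the theorem is designed to circumvent. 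The paper's mechanism is different: which polar curve detects a value $c$ is governed not by the position of a singular point in a fixed coordinate flag, but by $k:=\dim\bigl(\bX^\ity\cap\bt^{-1}(c)\cap\Sing^\ity f\bigr)$. One slices $k$ times by hyperplanes generic with respect to the Whitney stratification, proving at each step the dimension-drop equality $\dim\bigl(\bY^\ity_c\cap\Sing^\ity f_{|H}\bigr)=k-1$ (the upper bound from the one-directional lemma, the lower bound --- hence non-emptiness --- from Bertini-type transversality), until $f_k$ has a nonempty finite set of isolated $t$-singularities over $c$, which $\Gamma(x_{k+1},f_k)$ then detects by Theorem \ref{t:polar}. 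The detected points are in general \emph{different} from the point $a$ at which $t$-regularity of $f$ originally failed; only the value survives the slicing. Your proposal has no analogue of this dimension-counting induction, and without it the values coming from non-isolated $t$-singularities at infinity are not captured.
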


Note that $J_f(\Sing f)$ equals the the set of critical values of $f$ which are images of fibers containing nonisolated singularities.

\medskip

Let $\Sing f = S_0 \cup S_1\cup \cdots \cup S_r$ be the decomposition of the singular locus into irreducible components, where $S_0$ denotes the union of all point-components (i.e. $S_0$ is the set of isolated singularities of $f$).
 For $i>1$ we denote by $d_i = \deg S_i$  the degree of the positive dimensional component $S_i$.

\begin{corollary}\label{c:bound}
For $d>2$ we have:
 \begin{equation}\label{eq:bound2}
  \# NK_\infty(f) \le \frac{(d -1)^n - 1}{d-2} - \sum_{i=1}^r
 d_i \dim S_i,
 \end{equation}
and for   $d=2$:
 \[ \# NK_\infty(f) \le n-1 - \sum_{i=1}^r d_i \dim S_i.\]
 \fin
\end{corollary}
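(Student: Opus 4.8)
The plan is to read off the cardinality bound from the exact description \eqref{eq:union}. Dropping the subtracted set in Theorem \ref{t:1} gives $NK_\infty(f)\subseteq\bigcup_{i=1}^{n-1}J_f(H\Gamma(x_i,f_{i-1}))$, so
\[ \#NK_\infty(f)\le\sum_{i=1}^{n-1}\#J_f(H\Gamma(x_i,f_{i-1})). \]
For any affine curve $C\subset\bC^{N}$ the set $J_f(C)$ is made of the finite limits of $f$ taken along the branches of $C$ that escape to infinity; each such branch is a place of $\overline{C}\subset\bP^{N}$ lying over $\overline{C}\cap H_\infty$, and the number of these places is bounded by the intersection number $\overline{C}\cdot H_\infty=\deg C$. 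Hence $\#J_f(C)\le\deg C$, and the whole estimate is reduced to bounding $\sum_{i=1}^{n-1}\deg H\Gamma(x_i,f_{i-1})$.

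Next I would compute these degrees. After putting $x_1=\cdots=x_{i-1}=0$ the polynomial $f_{i-1}$ lives on an affine space of dimension $n-i+1$ and keeps degree $d$, and the critical locus $\Crt(x_i,f_{i-1})$ is defined there by the $n-i$ equations $\partial f_{i-1}/\partial x_j=0$, $j=i+1,\dots,n$, each of degree $d-1$. Its total intersection cycle therefore has degree $(d-1)^{n-i}$. Since $H\Gamma(x_i,f_{i-1})$ collects only the one-dimensional components of this locus that are not contained in $\Sing f_{i-1}$, the refined Bézout theorem, which expresses $(d-1)^{n-i}$ as a sum $\sum_Z m_Z\deg Z$ over the distinguished components with $m_Z\ge1$, gives
\[ \deg H\Gamma(x_i,f_{i-1})\le(d-1)^{n-i}-\sum_{Z}\deg Z, \]
the remaining sum running over the positive-dimensional components $Z\subset\Sing f_{i-1}$ of $\Crt(x_i,f_{i-1})$.

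It then remains to identify those singular components and to tally their degrees. For a generic system of coordinates (Definition \ref{d:polar}) each positive-dimensional component $S_j$ of $\Sing f$ meets the flag $\{x_1=\cdots=x_{i-1}=0\}$ properly, so its slice is a component of $\Crt(x_i,f_{i-1})$ lying in $\Sing f_{i-1}$, of dimension $\dim S_j-(i-1)$ and of the same degree $d_j$. This slice is positive-dimensional, and therefore contributes a subtraction of at least $d_j$, exactly for the levels $i=1,\dots,\dim S_j$; summing over these levels accounts for a total of $d_j\dim S_j$. Consequently
\[ \sum_{i=1}^{n-1}\deg H\Gamma(x_i,f_{i-1})\le\sum_{i=1}^{n-1}(d-1)^{n-i}-\sum_{j=1}^{r}d_j\dim S_j. \]
For $d>2$ the geometric sum is $\frac{(d-1)^n-(d-1)}{d-2}\le\frac{(d-1)^n-1}{d-2}$, which is \eqref{eq:bound2}; for $d=2$ every term $(d-1)^{n-i}$ equals $1$, the sum is $n-1$, and the second estimate follows. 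The isolated singularities $S_0$ and the zero-dimensional components of the critical loci produce neither a branch at infinity nor a positive-dimensional excess, so they do not intervene.

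The main difficulty is concentrated in the excess-intersection bookkeeping of the third paragraph: one has to guarantee that, for a sufficiently generic choice of coordinates, the only positive-dimensional components of each $\Crt(x_i,f_{i-1})$ are the successive generic hyperplane slices of the $S_j$, that these slices retain the degree $d_j$ and the expected dimension, and that each enters the refined Bézout equality with multiplicity at least one, so that a full $d_j$ is indeed removed at each of the $\dim S_j$ relevant levels. Once this is secured, the two displayed inequalities combine to give both bounds of the corollary.
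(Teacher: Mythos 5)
Your proposal is correct and follows essentially the same route as the paper's proof: Theorem \ref{t:1} reduces the count to the points at infinity of the successive generic polar curves $\overline{\Gamma(x_i,f_{i-1})}$, whose degrees are bounded via Bézout by $(d-1)^{n-i}$ minus the degrees of the positive-dimensional components of the sliced singular locus (each $S_j$ losing $d_j$ at exactly $\dim S_j$ levels), after which the geometric series gives the stated bounds. You are in fact slightly more careful than the paper, which asserts the equality $\sum_{i=1}^{n-1}(d-1)^{n-i}=\frac{(d-1)^n-1}{d-2}$ (the sum is actually $\frac{(d-1)^n-(d-1)}{d-2}$), whereas you correctly observe it is merely bounded above by $\frac{(d-1)^n-1}{d-2}$, which suffices.
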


 According to their definition, the polar curves of the above statement are affine curves,  some of them are maybe empty, and they
 do not detect, in general, values from $TK_\infty(f)$.
The first polar curve $\Gamma (x_1,f)$ detects some Malgrange non-regular value $c\in
NK_\infty(f)$ whenever
 the fiber $f^{-1}(c)$ has only isolated singularities
 at infinity in the sense of Definition \ref{d:t-sing}, cf Theorem \ref{t:polar}.
 However, the first polar curve may not detect all values from
 $NK_\infty(f)$ and that is why we need more polar curves. We explain this phenomenon by the existence of \emph{non-isolated $t$-singularities at infinity}, cf \S \ref{s:proof}. For example, if $f(x,y,x)=x+x^2y$ then the
 polar curve of $f$ is empty, but $f$ has a non-trivial Malgrange
 non-regular value $0$. This example also shows that Theorem 3.6
 in \cite{sa} is not correct.
More precisely, if we use polar curves, then the problem of detecting  non-trivial  Malgrange non-regular values cannot be done in a single step (as was wrongly claimed
in \cite{sa}), but turns out to fall into $n - 1$ steps as we describe now in our Theorem \ref{t:1}, each step being
the detection of the non-properness set of a certain generic polar curve.

However, the question ``\emph{is it possible to recover all
non-trivial Malgrange non-regular values in just one single step}'' subsists as a chalenging problem. We solve it positively in the second part of our paper by
 introducing a new and different device called ``super-polar curve''. Let us give here an outline of its construction.
We consider the following polynomials:

$$g_i(a,b)=\sum_j a_{ij}\frac{\partial f}{\partial x_j}+\sum_{j,k} b_{ijk} x_k \frac{\partial f}{\partial x_j}, \ i=1,\ldots,n-1,$$
where $a_{ij}, b_{ijk}$ are complex constants. Let:
\begin{equation}\label{eq:superpolar}
\Gamma_f(a,b):=\cl\{V(g_1,\ldots,g_{n-1})\setminus \Sing(f)\},
\end{equation}
where we use here the Zariski closure. It turns out that, for general  $a_{ij},
b_{ijk}$ the set $\Gamma_f(a,b)$ is  a non-empty curve, which we shall call
\emph{super-polar curve of $f$}. We say that a  component
$S\subset \Gamma_f(a,b)$ is {\it horizontal} if $f(S)$ is not  a single
point. The union of all horizontal components of
$\Gamma_f(a,b)$ will be called the {\it horizontal part of
$\Gamma_f(a,b)$} and will be  denoted by $H\Gamma_f(a,b)$. We obviously have the inclusion  $J_f({H\Gamma_f(a,b)}) \subset  J_f({\Gamma_f(a,b)})$. We prove the
following result:

\begin{theorem}\label{t:2}
The set  $NK_\infty(f)$ of nontrivial Malgrange  non-regular
values of $f$ is included in  the non-properness set of a mapping
$f$ restricted to the horizontal part of a sufficiently general
super-polar curve $\Gamma_f(a,b)$, namely one has the following
inclusion:
\begin{equation}\label{eq:union2}
 NK_\ity(f) \subset  J_f({H\Gamma_f(a,b)}).
\end{equation}

\end{theorem}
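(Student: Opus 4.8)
The plan is to prove the inclusion directly: for each $c\in NK_\infty(f)$ I will produce a horizontal branch of a sufficiently general super-polar curve that runs to infinity with $f\to c$. Two reductions come first. Because $\Gamma_f(a,b)$ is a curve, $J_f(H\Gamma_f(a,b))$ is a finite set, and $NK_\infty(f)$ is finite as well; hence it suffices to treat a single value $c$ and to show that the admissible parameters $(a,b)$ form a Zariski-dense subset $U_c$, after which a single $(a,b)\in\bigcap_c U_c$ handles all of $NK_\infty(f)$ at once. The elementary but decisive observation is that along any Malgrange sequence $x_\ell\to\infty$ with $f(x_\ell)\to c$ and $\|x_\ell\|\,\|df(x_\ell)\|\to 0$ one has simultaneously $\partial_j f(x_\ell)\to 0$ and $x_k\partial_j f(x_\ell)\to 0$ for all $j,k$, since $|x_k\partial_j f|\le\|x_\ell\|\,\|df(x_\ell)\|$ and $\|df(x_\ell)\|\to 0$. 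Therefore $g_i(x_\ell)\to 0$ for every $i$ and for every choice of the constants $a_{ij},b_{ijk}$: each Malgrange sequence is an asymptotic solution of the entire super-polar system, independently of the parameters.

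Next I pass to a compactification adapted to this. Put $\phi(x)=\big((\partial_j f(x))_j,(x_k\partial_j f(x))_{j,k}\big)\in\bC^N$ with $N=n+n^2$, so that $g_i=\langle\lambda_i,\phi\rangle$ for $\lambda_i=(a_i,b_i)$ and $\phi^{-1}(0)=\Sing f$. On $\bC^n\setminus\Sing f$ set $\Psi=[\phi]:\bC^n\setminus\Sing f\to\bP^{N-1}$; for general $\lambda_1,\dots,\lambda_{n-1}$ the super-polar curve is $\Gamma_f(a,b)=\overline{\Psi^{-1}(\bP(K))}$, where $K=\bigcap_i\ker\lambda_i$ and $\bP(K)\subset\bP^{N-1}$ is a general linear subspace of codimension $n-1$. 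Let $Z=\overline{\graph\big(x\mapsto(x,\Psi(x),f(x))\big)}\subset\bP^n\times\bP^{N-1}\times\bP^1$, a variety of dimension $n$, and let $Z_\infty=Z\cap(H^\infty\times\bP^{N-1}\times\bP^1)$ be its boundary divisor, of dimension $n-1$. Write $Z_\infty^c=Z_\infty\cap\{f=c\}$ and $W_c=\Psi(Z_\infty^c)\subset\bP^{N-1}$. By the observation above (and curve selection, which replaces the Malgrange sequence by a real-analytic arc), $Z_\infty^c$ is non-empty, and $W_c$ records the limiting $\phi$-directions of arcs to infinity over $c$.

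The geometric heart is then an intersection count. Provided $\dim W_c\ge n-1$, the projective dimension theorem forces $\bP(K)\cap W_c\ne\emptyset$, because $\bP(K)$ has codimension $n-1$ in $\bP^{N-1}$. A point of $Z_\infty^c$ with $\Psi$-image in $\bP(K)$ is exactly a point at infinity of $\Gamma_f(a,b)$ lying over $c$, so a branch of $\Gamma_f(a,b)$ escapes to infinity with $f\to c$ and $c\in J_f(\Gamma_f(a,b))$. That the branch is horizontal uses $c\notin TK_\infty(f)$: for general $(a,b)$ the only vertical components of $V(g_1,\dots,g_{n-1})$ are contained in $\Sing f$ and are deleted by the Zariski closure in \eqref{eq:superpolar}, so the branch carrying the non-trivial value $c$ cannot sit inside a single fibre and $f$ is non-constant on its component; hence $c\in J_f(H\Gamma_f(a,b))$. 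To make the passage from $Z_\infty^c\cap\{\Psi\in\bP(K)\}$ to a genuine point at infinity of $\overline{\Psi^{-1}(\bP(K))}$ legitimate, I would invoke a Bertini-type transversality statement ensuring that for general $\bP(K)$ one has $Z\cap(\bP^n\times\bP(K)\times\bP^1)=\overline{\graph(\Psi|_{\Psi^{-1}(\bP(K))})}$, i.e. that taking the general linear section commutes with the closure at infinity.

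The main obstacle is the dimension estimate $\dim W_c\ge n-1$, and this is precisely where the enrichment of the gradient by the terms $x_k\partial_j f$ is indispensable. At a value $c$ whose fibre carries \emph{non-isolated} $t$-singularities at infinity, in the sense of Definition \ref{d:t-sing} — the very situation that defeats a single ordinary polar curve and forces the $n-1$ successive steps of Theorem \ref{t:1} — the pure gradient directions $[(\partial_j f)]$ sweep out too small a set, while the mixed coordinates $x_k\partial_j f$ encode simultaneously the direction of approach $u\in H^\infty$ and the gradient direction; as $u$ ranges over the positive-dimensional $t$-singular locus $\Sigma_c$ and the relevant Puiseux rates vary, the induced directions fill out a $W_c$ of dimension at least $n-1$. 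Establishing this lower bound rigorously through the $t$-regularity description recalled in \S\ref{s:reg}, together with the transversality statement above that upgrades the asymptotic relations $g_i\to 0$ to an honest branch of $\{g_i=0\}$ at infinity, is the delicate technical core of the proof.
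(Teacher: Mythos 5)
Your starting observation is correct and is exactly the paper's point of departure: along a Malgrange sequence every coordinate of $\phi=\bigl((\partial_j f)_j,(x_k\partial_j f)_{j,k}\bigr)$ tends to $0$, which the paper records as the identity $K_\infty(f)=A\cap J_\Phi$ of \eqref{eq:K}. From there, however, your route diverges, and it has a genuine gap at its declared core: the estimate $\dim W_c\ge n-1$ for $c\in NK_\infty(f)$, which you need for the projective dimension theorem to force $\bP(K)\cap W_c\neq\emptyset$, is not proved, and it cannot follow from soft dimension theory. Indeed $Z_\infty$ has pure dimension $n-1$ (a hypersurface section of the irreducible $n$-fold $Z$), so for all but finitely many $c$ one has $\dim Z_\infty^c\le n-2$, hence $\dim W_c\le n-2$, and a generic $\bP(K)$ of codimension $n-1$ misses $W_c$ entirely. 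Your claim is therefore a dimension-jump theorem: Malgrange non-regularity at $c$ must force the set of limiting pairs (direction of approach, gradient direction) to sweep out $n-1$ dimensions --- note that since $\sum_{j,k}|x_k\partial_jf|^2=\|x\|^2\|\grad f\|^2$ the rank-one block dominates as $\|x\|\to\infty$, so $W_c$ lies in the $(2n-2)$-dimensional Segre locus of directions $[u\otimes \grad f]$, which makes the bound plausible but in no way automatic. Nothing in your sketch (Puiseux rates ``filling out'' $W_c$) establishes it, and it is of essentially the same depth as the theorem itself. The paper avoids this entirely: the codimension-$(n-1)$ geometry is secured once and for all by the purity theorem for the non-properness set (Theorem \ref{setSF}: $J_\Phi$ has pure dimension $n-1$ when non-empty), and the passage to the curve is made not by intersecting limit directions at infinity but by the Generalized Noether Lemma (Proposition \ref{noether}), which supplies a projection $\Pi=(x_1,L_1,\dots,L_{n-1})$ with $J_\Pi$ a union of vertical hyperplanes and with finite non-empty fibers over the marked points, after which Proposition \ref{q-f} ($J_g=J_f\cap Z$ for quasi-finite maps) transfers non-properness at $(c,0)$ to the curve $\Gamma'=\Psi'^{-1}(\Pi(A))$, which is precisely $H\Gamma_f(a,b)$.

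Two further steps are incorrect or missing as stated. First, your horizontality argument asserts that for general $(a,b)$ the only vertical components of $V(g_1,\dots,g_{n-1})$ lie in $\Sing f$; this is false: since the first coordinate of $\Psi=\Pi\circ\Phi$ is $f$ itself, every positive-dimensional fiber of $\Psi$ is a vertical component of $\Gamma_f(a,b)$ not contained in $\Sing f$, and the paper explicitly allows for such components, obtaining horizontality instead from the quasi-finiteness of $\Psi'$ after removing the hypersurface $R\supset\overline{V}$, so that $\Gamma'$ has no vertical components by construction. Second, the ``Bertini-type transversality'' you invoke --- that the generic linear section commutes with closure at infinity, so that a point of $Z_\infty^c$ with $\Psi$-image in $\bP(K)$ is genuinely a limit of affine points of $\Psi^{-1}(\bP(K))$ --- is again left unproved, and it interacts with the first gap: whether the intersection at infinity is empty, finite, or in excess is governed exactly by the dimension of $\Psi(Z_\infty)$, i.e.\ by the same unestablished estimate. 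In the paper this issue is absorbed by Proposition \ref{noether}(d) (finiteness and non-emptiness of the fibers $\Pi^{-1}(\Pi(p_j))$ over the chosen points $p_j\in NK_\infty(f)\cup(B(x_1,X)\cap A)\setminus f(\Sing f)$) combined with Proposition \ref{q-f}. In short: your reduction to a single value, your identification of the super-polar curve as the $\Phi$-preimage of a general codimension-$(n-1)$ linear slice, and the key asymptotic observation are all sound and parallel to the paper, but the two pillars of your argument are exactly the two statements you leave unproved, and the first fails for generic $c$, so proving it at the non-regular values would require the full strength of machinery such as Theorem \ref{setSF} plus the Noether lemma --- which is the path the paper actually takes.
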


 \smallskip

\begin{corollary}\label{c2}
  If $n>2$ and $NK_\infty(f)\not=\emptyset$ then:

 \[\#NK_\infty(f)\le d^{n-1}-1 - \sum_{i=1}^r d_i.\]

In particular
 if $NK_\infty(f)\not=\emptyset$, then:
\[\#K_\infty(f)\le d^{n-1}-1 - \sum_{i=1}^r (d_i-1).\]

In case $n=2$,  if $NK_\infty(f)\not=\emptyset$, then:
 \[\#NK_\infty(f)\le d-2 - \sum_{i=1}^r d_i, \ \  \mbox{ and } \ \ \#K_\infty(f)\le d-2 - \sum_{i=1}^r
 (d_i-1).\]
\end{corollary}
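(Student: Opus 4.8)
The plan is to derive both estimates from the inclusions of Theorem~\ref{t:2} (for $n>2$) and Theorem~\ref{t:1} (for $n=2$), by first bounding the non-properness set of $f$ along a horizontal curve in terms of the degree of that curve, and then bounding the degree by B\'ezout. The key counting input is the following lemma for the Jelonek set of a curve. Let $C\subset\bC^n$ be an affine curve each of whose irreducible components $C_1,\dots,C_h$ is horizontal, so that $f$ is non-constant on every $C_j$. Passing to the normalisation $\widetilde{C_j}$ of the projective closure $\overline{C_j}\subset\bP^n$, the polynomial $f$ extends to a morphism $\widetilde f_j\colon\widetilde{C_j}\to\bP^1$ which is non-constant, hence surjective; since $f$ is finite at every affine point, $\widetilde f_j^{-1}(\infty)$ is a non-empty subset of the points of $\widetilde{C_j}$ lying over $\overline{C_j}\cap\{x_0=0\}$. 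As $J_f(C_j)$ consists exactly of the finite values $\widetilde f_j(p)$ taken at such points over infinity, and there are at most $\deg C_j$ of these counted with branches, discarding the at least one point with $\widetilde f_j(p)=\infty$ yields $\#J_f(C_j)\le\deg C_j-1$. Summing over components gives
\begin{equation}\label{eq:curvecount}
 \#J_f(C)\ \le\ \deg C - h .
\end{equation}

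The second step bounds $\deg\Gamma_f(a,b)$. For general $a,b$ each $g_i$ has degree $d$, so by the refined B\'ezout bound on the sum of the degrees of the irreducible components of an intersection of hypersurfaces, the components of $V(g_1,\dots,g_{n-1})$ have degrees summing to at most $d^{n-1}$. Since $\Sing f\subset V(g_i)$ for all $i$, every positive-dimensional component $S_i$ is a component of this intersection, distinct from the components of $\Gamma_f(a,b)=\cl\{V(g_1,\dots,g_{n-1})\setminus\Sing f\}$; hence
\[
 \deg\Gamma_f(a,b)+\sum_{i=1}^r d_i\ \le\ d^{n-1},\qquad\text{so}\qquad \deg H\Gamma_f(a,b)\le d^{n-1}-\sum_{i=1}^r d_i .
\]
Combining this with \eqref{eq:curvecount} applied to $C=H\Gamma_f(a,b)$, and using that $NK_\infty(f)\neq\emptyset$ forces $H\Gamma_f(a,b)\neq\emptyset$ (so $h\ge1$) through Theorem~\ref{t:2}, I obtain $\#NK_\infty(f)\le\#J_f(H\Gamma_f(a,b))\le d^{n-1}-1-\sum_{i=1}^r d_i$, the first assertion. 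For $n=2$ I would instead feed into \eqref{eq:curvecount} the horizontal part $H\Gamma(x_1,f)$ of the ordinary polar curve of Theorem~\ref{t:1}, which, being defined by the vanishing of the single derivative of $f$ transverse to $x_1$, is cut out by a polynomial of degree $d-1$ and so has degree at most $d-1-\sum_i d_i$ after removing the singular components. This produces $\#NK_\infty(f)\le d-2-\sum_i d_i$, the extra unit coming precisely from the drop by one in the degree of the defining polynomial.

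Finally, for the bounds on $\#K_\infty(f)$ I would add the trivial values. Since $K_\infty(f)=NK_\infty(f)\sqcup TK_\infty(f)$ and $TK_\infty(f)$ is exactly the set of asymptotic values of $f$ along $\Sing f$, that is $TK_\infty(f)=J_f(\Sing f)$, the remark following Theorem~\ref{t:1} identifies these with the images of the fibres carrying a non-isolated singularity, giving at most one value per positive-dimensional component and hence $\#TK_\infty(f)\le r$. Adding this to the previous estimates converts $-\sum_i d_i$ into $-\sum_i(d_i-1)$ and yields the stated bounds on $\#K_\infty(f)$ in both ranges of $n$.

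The main obstacle is the second step together with the genericity it rests on: one must guarantee that for a sufficiently general choice of $(a,b)$ the super-polar set is a reduced curve realising the full B\'ezout degree, that the positive-dimensional components $S_i$ appear as genuinely separate components (so that their degrees are subtracted rather than absorbed into $\Gamma_f(a,b)$), and that \eqref{eq:curvecount} is applied to a curve on which $f$ is non-constant along every branch. Controlling $\#TK_\infty(f)$ for higher-dimensional or horizontal singular components is the other delicate point, handled through the characterisation of $J_f(\Sing f)$ recalled after Theorem~\ref{t:1}. \fin
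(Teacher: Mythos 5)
Your proposal is correct, and for $n>2$ it follows essentially the paper's own argument: Theorem \ref{t:2} (via Theorem \ref{t:mainsuperpolar}) reduces the problem to bounding $\#J_f(H\Gamma_f(a,b))$, the generalized B\'ezout theorem gives $\deg \Gamma_f(a,b)\le d^{n-1}-\sum_{i=1}^r d_i$ because the positive-dimensional components of $\Sing f$ occur as separate components of $V(g_1,\ldots,g_{n-1})$, and the final $-1$ comes from the same observation you package into your counting lemma $\#J_f(C)\le \deg C-h$, namely that each branch of $\overline{\Gamma'}$ at infinity also realizes the value $\infty$ of $f$; your per-component bookkeeping ($-h$ instead of $-1$) is a mild sharpening not needed for the stated bound, and the step $\#TK_\infty(f)\le r$ is likewise the paper's. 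The genericity worries you flag at the end are settled exactly as you suspect: Theorem \ref{t:mainsuperpolar}(a) guarantees $\Gamma_f(a,b)$ is a curve for general $(a,b)$, and since its components are by definition the components of $V(g_1,\ldots,g_{n-1})$ not contained in $\Sing f$, no $S_i$ can be absorbed into it. Where you genuinely diverge is the case $n=2$: the paper does not pass to the ordinary polar curve there, but re-runs the super-polar construction with the smaller map $\Phi=(f,\frac{\partial f}{\partial x},\frac{\partial f}{\partial y})$, justified by the two-variable fact (see \cite{Ha}, \cite{Ha1}, \cite{LO}) that Malgrange regularity is governed by the gradient alone, so that the defining forms have degree $d-1$ and the same computation yields $d-2-\sum_{i=1}^r d_i$. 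You instead invoke the equality of Theorem \ref{t:1}, which for $n=2$ involves the single polar curve $\Gamma(x_1,f)=\overline{\{\partial f/\partial x_2=0\}\setminus \Sing f}$ of degree at most $d-1-\sum_{i=1}^r d_i$, and feed it into your counting lemma; this is valid (non-emptiness of the horizontal part follows from $NK_\infty(f)\neq\emptyset$ through the equality in Theorem \ref{t:1}) and is arguably more self-contained, as it avoids the appeal to the two-variable gradient characterization, whereas the paper's route has the merit of keeping a uniform super-polar framework in all dimensions.
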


The plan of the paper goes as follows: in \S \ref{s:reg} and \S \ref{s:noether} we develop some
preliminary results in order to prepare the proofs of Theorem
\ref{t:1} in \S \ref{s:proof}, and of Theorem \ref{t:2} in \S
\ref{s-polar}, together with their corollaries, respectively. In
\S \ref{alg} we sketch the algorithm to detect the set
$NK_\infty(f)$ effectively.


\section{Regularity conditions at infinity}\label{s:reg}

 \subsection{Malgrange regularity condition at a point at infinity}\label{d:Mcond}

 Pham formulated in \cite[2.1]{Ph} a regularity condition
which he attributed to  Malgrange.  We recall the localized version at
infinity, after \cite{Ti-top}, \cite{Ti-reg}.

 We identify $\bC^{n}$ to the graph of
$f$, namely $X :=  \{(x,\tau) \in \bC^{n}\times \bC \mid f(x) =\tau\}$, and consider its algebraic closure in $\bP^{n}\times \bC$, which is the hypersurface:
\begin{equation}\label{eq:X}
  \bX = \{\tilde
f(x_0, x) - \tau x_0^{d}= 0  \} \subset \bP^{n}\times \bC,
\end{equation}
where $x_0$ denotes the variable at infinity,  $d = \deg f$ and $\tilde f(x_0, x)$ denotes the homogenization of degree $d$ of $f$ by the variable $x_0$.
Let $\bt: \bX \to \bC$ denote the restriction to $\bX$ of the second projection $\bP^n\times\bC\rightarrow\bC$,  a proper
 extension of the map $f$.  We denote by $\bX^\ity = \bX\setminus X$ the
 divisor at infinity defined in each affine chart by the equation $x_0 =0$.

\begin{definition}\cite{Ti-reg}
 Let  $\{\bx_i\}_{i\in \bN} \subset
\bC^n$ be a  sequence of points with the following properties:
\begin{itemize}
\item[(L$_1$)]  \hspace{5mm}
$\| \bx_i\| \to \infty$ and $f(\bx_i) \to \tau$, as $i\to \infty$.
\item[(L$_2$)]  \hspace{5mm}
$ \bx_i \to p \in \bX^{\infty}$, as $i\to \infty$.
\end{itemize}
One says that the fibre $f^{-1}(\tau)$  verifies the {\em Malgrange condition} if
there is $\delta > 0$ such that, for any sequence of points with
property (L$_1$) one has
\begin{itemize}
\item[(M)]  \hspace{5mm}
$ \|  \bx_i\|\cdot \| \grad f(\bx_i)\| > \delta$.
\end{itemize}
We say that $f$ verifies {\em Malgrange condition at} $p\in
\bX^{\infty}$ if there is $\delta_p >0$ such that one has (M) for any sequence of
points with property (L$_2$).
\end{definition}


\begin{remark}\label{n:interpret}
It  follows from the definition that $f^{-1}(\tau)$  verifies the Malgrange
condition if and only if $f$ verifies  Malgrange condition (M) at any point $p = (z,\tau) \in
\bX^{\infty}\cap \bt^{-1}(\tau)$ and for the same positive constant $\delta_p = \delta$.
\end{remark}



\subsection{Characteristic covectors and $t$-regularity}\label{ss:char_covect}

We recall the notion of $t$-regularity from \cite{Ti-cras}, \cite{Ti-reg}. Let $H^{\infty}=\{[x_0:x_1:\ldots:x_n]\in\bP^n\mid x_0=0\}$ denote the hyperplane at infinity
and let $\bX^{\infty}:=\bX\cap (H^{\infty}\times\bC)$.

We consider the  affine charts $U_j\times\bC$ of $\bP^n\times\bC$, where $U_j=\{x_j\neq0\}$, $j=0,1,\ldots,n$. Identifying the chart $U_0$ with the affine space $\bC^n$, we have  $\bX\cap (U_0\times\bC)=\bX\setminus\bX^\infty= X$ and $\bX^\infty$ is covered by the charts $U_1\times\bC,\ldots, U_n\times \bC$.

If $g$ denotes the projection to the variable $x_0$ in some affine chart $U_j\times\bC$, then the
 \emph{relative conormal} $C_{g}(\bX\backslash \bX^{\infty} \cap U_j\times\bC) \subset \bX \times \check\bP^{n}$ is well defined (see e.g. \cite{Ti-asymp}, \cite{Ti-book}), with the projection $\pi(y,H) = y$, where $\check\bP^{n}$ is identified to the projective space of hyperplanes in $U_j\times\bC$.
Let us then consider the space $\pi^{-1}(\bX^\ity)$ which is well defined for every chart $U_j\times\bC$ as a subset of $C_{g}(\bX\backslash \bX^{\infty} \cap U_j\times\bC)$. By \cite[Lemma 3.3]{Ti-top}, the definitions coincide at the intersections of the charts.

\begin{definition}\label{d:car_covectors_space}
We call  {\em space of characteristic covectors at infinity} the well-defined set $\mathcal{C}^{\ity} :=\pi^{-1}(\bX^\ity)$. For some $p_0\in \bX^\ity$, we denote $\cC^\ity_{p_0} := \pi^{-1}(p_0)$.
\end{definition}

Considering now the second projection $\bt:\bP^n\times\bC\rightarrow\bC$ in place of the function $g$ in the above consideration, we obtain the relative conormal space $C_{\bt}(\bP^n\times\bC)$. Then we have:

\begin{definition}\label{d:t-reg} \cite{Ti-top}
 We say that $f$ is \emph{$t$-regular} at $p_0\in\bX^\infty$ if $C_{\bt}(\bP^n\times\bC)\cap \cC^\ity_{p_0}=\emptyset$ or, equivalently, $\d \bt \not\in \cC^\ity_{p_0}$.
\end{definition}

\begin{definition}\label{d:t-sing}
We say that $f$ has \emph{isolated $t$-singularities at infinity} at the fibre $f^{-1}(t_0)$ if
this fibre has isolated singularities in $\bC^n$ and if the set
\[ \Sing^\ity f := \{ p\in \bX^\ity \mid f^{-1}(t_0)
\mbox{ is not } t\mbox{-regular at $p$}\}\]
 is a finite set.
\end{definition}

It follows from the definition that $\Sing^\ity f$ is a closed algebraic subset of $\bX^\ity$,  see e.g.
 \cite{Ti-top}, \cite{Ti-book}, \cite[\S 6.1]{DRT}.  By the algebraic Sard Theorem, the image $\bt(\Sing^\ity f)$ consists of a finite number of points.

We need the following key equivalence in the localized setting (proved initially in \cite[Proposition 5.5]{ST} and \cite[Theorem 1.3]{Pa-m}, as explained in \cite{Ti-reg}):
\begin{theorem}\cite[Prop. 1.3.2]{Ti-book}\label{t:malgrange-t-reg}
 A polynomial $f: \bC^n \to \bC$ is $t$-regular at $p_0\in\bX^\infty$ if and only if
$f$ verifies the Malgrange condition at this point.
\fin
\end{theorem}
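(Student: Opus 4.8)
The plan is to prove the equivalence pointwise at $p_0$ by reducing each of the two conditions to the (non)existence of a suitable sequence approaching $p_0$, and then matching the two descriptions through an explicit dictionary between the characteristic covectors and the rescaled partial derivatives of $f$. I would fix a chart $U_j\times\bC$ containing $p_0$ (so that the $j$-th projective coordinate is nonzero at $p_0$), keep $x_0$ as the coordinate cutting out $\bX^\infty$, and record the two elementary facts used throughout: along any sequence tending to a point of $\bX^\infty$ in this chart the affine norm satisfies $\|x\|\sim 1/|x_0|$, and the homogenization obeys $\partial\tilde f/\partial x_k=x_0^{d-1}\,\partial f/\partial x_k$ for $k\neq j$. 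Writing $h:=\tilde f-\tau x_0^{d}$ for the local equation of $\bX$, the relative conormal of the projection $g=x_0$ has, over a regular point, the two-dimensional fibre spanned by $\d h$ and $\d x_0$; hence $\cC^\ity_{p_0}$ consists of the projective limits of covectors $\lambda\,\d h+\mu\,\d x_0$ as the base point tends to $p_0$.

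I would first treat the implication ``Malgrange at $p_0$ $\Rightarrow$ $t$-regular at $p_0$'' in its contrapositive form, which is the easy one. If the Malgrange condition fails at $p_0$ there is a sequence $\bx_i\to p_0$ with $\|\bx_i\|\to\infty$ and $\|\bx_i\|\cdot\|\grad f(\bx_i)\|\to 0$; by $\|x\|\sim 1/|x_0|$ this forces $x_0^{-1}\partial f/\partial x_k\to 0$ for \emph{every} $k$. Taking $\lambda=-x_0^{-d}$ and then choosing $\mu$ to annihilate the $\d x_0$-component, the admissible conormal covector $\lambda\,\d h+\mu\,\d x_0$ has $\tau$-component equal to $1$ and all remaining components equal to $-x_0^{-1}\partial f/\partial x_k\to 0$; it therefore converges to $\d\bt$, so $\d\bt\in\cC^\ity_{p_0}$ and $f$ is not $t$-regular at $p_0$.

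The converse ``$t$-regular $\Rightarrow$ Malgrange'', again contrapositively, is where the main difficulty lies. If $\d\bt\in\cC^\ity_{p_0}$ then the same covector computation yields a sequence tending to $p_0$ along which $x_0^{-1}\partial f/\partial x_k\to 0$ for all $k\neq j$ --- but the computation leaves the $\d x_0$-component free (it conceals $\partial f/\partial x_j$ inside $\partial_{x_0}\tilde f$), so a priori it controls only $n-1$ of the $n$ rescaled partials. The crux is to show that the last one, $x_0^{-1}\partial f/\partial x_j$, is then automatically forced to tend to $0$; granting this, $\|x\|\cdot\|\grad f\|\to 0$ and Malgrange fails at $p_0$. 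I expect this to be the principal obstacle, and I would resolve it as follows: apply the (real) curve selection lemma to replace the sequence by an analytic arc, parametrize it by $t:=x_0\to 0$ (treating $\tau$ and the chart coordinates $x_k$ as Puiseux functions of $t$), and write $f=\sum_e f_e$ in homogeneous components. Since $\bt$ is continuous, $\tau(t)\to\tau_0$ stays bounded, whence $t\,\dot\tau\to 0$ and $t\,\dot x_k\to 0$. Differentiating $f$ along the arc and applying Euler's identity $\sum_k x_k\,\partial f_e/\partial x_k=e\,f_e$ degree by degree, one expresses $x_0^{-1}\partial f/\partial x_j$ as a combination, with bounded coefficients, of the already-controlled quantities $x_0^{-1}\partial f/\partial x_k$ ($k\neq j$) together with the terms $t\,\dot\tau$ and $t\,\dot x_k\cdot(x_0^{-1}\partial f/\partial x_k)$; all of these tend to $0$, giving the claim.

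Finally I would dispatch the routine points that make the argument rigorous: the chart construction is legitimate because the definitions of $\cC^\ity$ agree on overlaps (already quoted from \cite[Lemma 3.3]{Ti-top}); the covectors produced genuinely lie in the relative conormal, since this holds for base points regular for $g$ and passing to the closure absorbs the exceptional ones; and both conditions are local at $p_0$, matching the localized Malgrange condition of Remark \ref{n:interpret} and the pointwise definition of $t$-regularity. Combining the two contrapositives then yields the stated equivalence.
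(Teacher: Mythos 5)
Your proposal is correct, but note that the paper itself contains no proof of this statement: it is quoted from \cite[Prop.~1.3.2]{Ti-book}, the result having been proved originally in \cite[Prop.~5.5]{ST} and \cite[Thm.~1.3]{Pa-m}, as the paper says just before the statement, so the comparison must be with that cited proof. Your argument essentially reconstructs it: the chart computation is right (with $h=\tilde f-\tau x_0^d$ one gets, after normalizing the $\tau$-component, covector components $-x_0^{-1}\partial f/\partial x_k$ for $k\neq j$, the $\d x_0$-component being absorbed by $\mu$), so failure of $t$-regularity is exactly the existence of a sequence along which the $n-1$ rescaled partials vanish asymptotically, and your crux step closes as predicted, since differentiating $\tau=f(\bx(t))$ along the selected arc gives $x_0^{-1}\partial f/\partial x_j=-t\dot\tau+\sum_{k\neq j}\bigl(t\dot u_k-u_k\bigr)\,x_0^{-1}\partial f/\partial x_k$ in the chart coordinates $u_k$, a combination with bounded coefficients of quantities tending to $0$ --- which is precisely the mechanism of the cited proof.
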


 More precisely we have the following relations, cf \cite{Ti-asymp}, \cite{Ti-book}:
\begin{equation} \label{eq:reg}
 \mbox{Malgrange regularity} \Longleftrightarrow t\mbox{-regularity}
\Longrightarrow
\rho_E\mbox{-regularity} \Longrightarrow \mbox{topological triviality}
\end{equation}
which also extend to polynomial maps $\bC^n \to \bC^p$ as shown in \cite{DRT}.


\subsection{Polar curves and $t$-regularity}\label{s:polar}

We define the affine polar curves of $f$ and show how they are
related to the $t$-regularity condition, after \cite{Ti-asymp}.

 Given a polynomial $f: \bC^n \to \bC$ and a
linear function $l: \bC^n \to \bC$, the \emph{polar curve of $f$ with respect to $l$}, denoted by  $\Gamma (l,f)$, is the closure in $\bC^n$
of the set $\Sing (l,f)\setminus \Sing f$, where $\Sing (l,f)$ is the critical locus of the
map $(l,f): \bC^n\to \bC^2$.
Denoting by $l_H :\bC^n \to \bC$ the
unique linear form (up to multiplication by a constant) which defines a hyperplane $H\in \bC^n$ (also regarded as a point in the   projective space $\check\bP^{n-1}$ of linear hyperplanes in $\bC^n$), we have the following genericity result of Bertini-type.

\begin{lemma}\label{l:polar} {\rm \cite[Lemma 1.4]{Ti-top}}\\
There exists a Zariski-open set $\Omega_{f,a} \subset \check\bP^{n-1}$ such that, for any $H\in \Omega_{f,a} $ and some fixed  $a\in \bC$,  the polar set $\Gamma(l_H,f)$ is a curve or it is an empty set, and no component is contained in the fibre $f^{-1}(a)$.
\fin
\end{lemma}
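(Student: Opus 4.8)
The plan is to realise every polar curve $\Gamma(l_H,f)$ as a fibre of a single ``Gauss-type'' morphism and then to read off both assertions from the theorem on generic fibre dimension (Chevalley's theorem). Since $\deg f\ge 2$, the polynomial $f$ is non-constant, so $\Sing f$ is a proper closed subset and $\bC^n\setminus \Sing f$ is irreducible of dimension $n$. On this set the gradient never vanishes, hence
\[ \gamma\colon \bC^n\setminus\Sing f \to \check\bP^{n-1}, \qquad \gamma(x)=[\grad f(x)], \]
is a well-defined morphism. For $x\notin \Sing f$ the Jacobian of $(l_H,f)$ has the two nonzero rows $l_H$ and $\grad f(x)$, so $x\in\Sing(l_H,f)$ precisely when these are proportional, i.e. when $\gamma(x)=[l_H]$. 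Thus $\Gamma(l_H,f)\setminus\Sing f=\gamma^{-1}([l_H])$, and as $\Gamma(l_H,f)$ is a Zariski closure we get $\Gamma(l_H,f)=\overline{\gamma^{-1}([l_H])}$; in particular its dimension equals that of $\gamma^{-1}([l_H])$.

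First I would establish that for generic $H$ the polar set is a curve or empty. Let $Y=\overline{\gamma(\bC^n\setminus\Sing f)}$, an irreducible variety of dimension $m\le n-1$. If $\gamma$ is not dominant, then $Y\subsetneq\check\bP^{n-1}$ is proper closed and for every $H$ with $[l_H]\notin Y$ the fibre $\gamma^{-1}([l_H])$ is empty, so $\check\bP^{n-1}\setminus Y$ already witnesses the lemma. If $\gamma$ is dominant, then $m=n-1$ and the generic fibre dimension theorem provides a nonempty Zariski-open $U\subset\check\bP^{n-1}$ over which every fibre has dimension exactly $n-(n-1)=1$; hence for $[l_H]\in U$ the set $\Gamma(l_H,f)$ is a curve.

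Next I would secure the second conclusion. The fibre $f^{-1}(a)$ is a hypersurface, so each of its irreducible components not contained in $\Sing f$ has dimension $n-1$; call them $Z_1,\dots,Z_s$ and set $p_j:=\gamma|_{Z_j\setminus\Sing f}\colon Z_j\setminus\Sing f\to\check\bP^{n-1}$. For each $j$, either $p_j$ is not dominant, so $\overline{p_j(Z_j\setminus\Sing f)}$ is proper closed in $\check\bP^{n-1}$, or $p_j$ is a dominant map between varieties of the same dimension $n-1$, in which case the fibre dimension theorem yields a proper closed subset of $\check\bP^{n-1}$ outside of which all fibres of $p_j$ are finite. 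Let $W$ be the union over $j$ of these proper closed subsets; it is proper closed since there are finitely many $Z_j$. If for some $[l_H]\notin W$ an irreducible component $C$ of $\Gamma(l_H,f)$ were contained in $f^{-1}(a)$, then $C\setminus\Sing f$, a dense one-dimensional subset of $C$, would lie in $(f^{-1}(a)\setminus\Sing f)\cap\gamma^{-1}([l_H])=\bigcup_j p_j^{-1}([l_H])$, forcing some $p_j^{-1}([l_H])$ to have dimension $\ge 1$ — impossible for $[l_H]\notin W$.

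Finally I would take $\Omega_{f,a}:=U\cap(\check\bP^{n-1}\setminus W)$ in the dominant case, respectively $\Omega_{f,a}:=\check\bP^{n-1}\setminus Y$ in the non-dominant case; this is a nonempty Zariski-open subset of the irreducible variety $\check\bP^{n-1}$ satisfying both requirements. The only genuine subtlety, and the step I would treat most carefully, is the bookkeeping of the exceptional loci of the fibre dimension theorem: one must verify that the jump loci are proper closed in the \emph{target} (not merely in the source), and that passing to the Zariski closure in the definition of $\Gamma(l_H,f)$, which can only adjoin points of $\Sing f$, affects neither the dimension count of the second step nor the containment argument of the third.
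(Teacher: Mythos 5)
Your proof is correct, and since the paper gives no argument of its own here --- Lemma \ref{l:polar} is quoted directly from \cite[Lemma 1.4]{Ti-top} --- the right comparison is with that source, whose proof is essentially yours: one views the incidence set $\{(x,H) \mid x\in\Sing(l_H,f)\setminus\Sing f\}$ as the graph of the projectivized gradient map $\gamma(x)=[\grad f(x)]$ and applies the Chevalley fibre-dimension theorem to the projection to $\check\bP^{n-1}$, both for $\bC^n\setminus\Sing f$ and for the components of $f^{-1}(a)$ not contained in $\Sing f$. The one subtlety you flag --- that the dimension-jump loci are a priori only constructible in the target --- is settled exactly along the lines you indicate: the jump locus in the \emph{source} is closed, the restriction of the map to it has fibres of dimension $\ge 1$, so its image has dimension at most $n-2$ and its closure is a proper closed subset of $\check\bP^{n-1}$, which suffices for your definition of $\Omega_{f,a}$.
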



\begin{definition} \label{d:polar}
For $H\in \Omega_{f,a} $, we call  $\Gamma (l_H,f)$ the {\em generic affine polar curve} of $f$
with respect to $l_H$. A system of coordinates $(x_1, \ldots , x_n)$ in $\bC^n$ is
called {\em generic} with respect to $f$ iff $\{ x_i=0\} \in \Omega_{f,a} $, $\forall i$.
\end{definition}
 It follows from  Lemma \ref{l:polar} that such systems of coordinates are generic among all linear systems of coordinates.

 Let $\overline{\Gamma (l_H,f)}$ and $\overline{\Sing f}$  denote the algebraic closure in $\bX$ of the respective sets. We then have:

\begin{theorem}\label{t:polar}
Let  $f:\bC^n\to \bC$ be a polynomial function and let $p\in \bX^\ity$, $a:=\bt(p)$.
\begin{enumerate}
\rm \item \it If $p$ is a $t$-regular point then $p\not \in \overline{(\Gamma (l_H,f)} \cup \overline{\Sing f})\cap \bX^\ity$, for any $H\in \Omega_{f,a} $.
\rm \item \it Let  $p$ be either  $t$-regular or an isolated $t$-singularity at infinity.
Then $p$ is a $t$-singularity at infinity if and only if  $p \in \overline{\Gamma (l_H,f)}$ for some $H\in \Omega_{f,a} $.
\end{enumerate}
\end{theorem}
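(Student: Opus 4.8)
The plan is to read both assertions inside the relative conormal $C_g(\bX\setminus\bX^\ity)$ attached to the projection $g=x_0$ to infinity, and to set up a dictionary between the affine polar condition and the presence of the covector $\d\bt$ in the fibre $\cC^\ity_{p}$. First I would fix a chart $U_j\times\bC$ with $j\ge 1$ around $p$, write $g=x_0$ in the original affine coordinates, and compute the conormal to its level sets on the graph $\bX$; expanding this covector in the coordinates of the chart shows that a characteristic covector at $p$ is exactly a limit, along some sequence $x_k\to p$, of expressions built from the projectivised affine differential $[\d f(x_k)]$ together with the $\tau$--component. This computation is the cornerstone of the argument, and I expect the bookkeeping here --- keeping track of the interplay between $x_0$, $\tau$ and the affine Gauss direction $[\d f]$ --- to be the most delicate point, the agreement on chart overlaps being guaranteed by \cite[Lemma~3.3]{Ti-top} as in \cite{Ti-asymp}.

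Granting this dictionary, I would deduce part (1) by contraposition, treating the two pieces separately. For $\overline{\Sing f}$ the argument is direct and needs no conormal: if $x_k\in\Sing f$ with $x_k\to p$ and $f(x_k)\to a$, then $\grad f(x_k)=0$, so $\|x_k\|\cdot\|\grad f(x_k)\|=0$ violates the Malgrange condition (M) at $p$, whence by Theorem~\ref{t:malgrange-t-reg} the point $p$ fails to be $t$--regular. For $\overline{\Gamma(l_H,f)}$ I would instead use the dictionary: if $x_k\in\Gamma(l_H,f)\setminus\Sing f$ with $x_k\to p$, then $\d l_H$ and $\d f(x_k)$ are dependent, so $[\d f(x_k)]=[l_H]$ is \emph{constant} along the sequence; feeding this into the conormal description identifies the limit covector as an element of $C_{\bt}(\bP^n\times\bC)\cap\cC^\ity_{p}$, so this intersection is non-empty and $p$ is not $t$--regular by Definition~\ref{d:t-reg}. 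Either case contradicts $t$--regularity, which is exactly the contrapositive of (1).

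For part (2) the only implication not already contained in (1) is that an isolated $t$--singularity is detected by \emph{some} generic polar curve, so assume $\d\bt\in\cC^\ity_{p}$. The plan is to apply the curve selection lemma inside $C_g(\bX\setminus\bX^\ity)$ to produce a real-analytic arc $x(s)\to p$ along which the relevant conormals converge to $\d\bt$, and then to read off from its $[\d f]$--part a limiting hyperplane direction $[H_0]\in\check\bP^{n-1}$. Here the isolatedness hypothesis of Definition~\ref{d:t-sing} is essential: it confines the $t$--singular points accumulating at $p$ to the single point $p$ of $\bX^\ity$, which lets me choose $H\in\Omega_{f,a}$ arbitrarily close to $H_0$ so that Lemma~\ref{l:polar} keeps $\Gamma(l_H,f)$ a curve with no component in $f^{-1}(a)$, while the arc (or a nearby one satisfying $[\d f]=[l_H]$) still runs inside $\Gamma(l_H,f)$ up to $p$. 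This gives $p\in\overline{\Gamma(l_H,f)}$, and the reverse implication of (2) is just (1).

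I expect the forward direction of (2) to be the main obstacle: converting the abstract membership $\d\bt\in\cC^\ity_{p}$ into an honest branch of a \emph{generic} polar curve reaching $p$ is exactly where the isolatedness must be spent, since for a non-isolated $t$--singularity the generic polar curve can fail to accumulate at $p$ --- precisely the phenomenon that later forces the passage to the $n-1$ polar curves of Theorem~\ref{t:1} and to the super-polar curve of Theorem~\ref{t:2}.
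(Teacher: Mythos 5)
Your framework (reading everything in the relative conormal of $g=x_0$ and translating via Theorem \ref{t:malgrange-t-reg}) is the right one --- it is the setting of \cite[\S 2.1]{Ti-book}, which is all the paper itself invokes --- but both crux steps of your proposal are unjustified. In part (1), the polar piece breaks down: knowing that $[\d f(x_k)]=[\d l_H]$ is \emph{constant} along the sequence does not place $\d\bt$ in $\cC^\ity_{p}$. In a chart at infinity the relative conormals available at $x_k$ are the classes $[\alpha(\d\tau-\d f(x_k))+\beta\,\d x_n]$, and after rewriting in the chart coordinates the coefficients of the $\d u_i$'s carry the factor $\|x_k\|\cdot\|\grad f(x_k)\|$ up to bounded terms (the $\d u_0$-coefficient being killable by the free parameter $\beta$). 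Hence the limit is $[\d\tau]$ exactly when $\|x_k\|\,\|\grad f(x_k)\|\to 0$ along a subsequence --- which, by Theorem \ref{t:malgrange-t-reg}, is literally the statement you are trying to prove; if instead $\|x_k\|\|\grad f(x_k)\|$ stays bounded away from $0$, the limit covectors are of the form $[\d\tau-c\,\xi_H]$ or $[\xi_H]$, not $[\d\tau]$. The missing content is a valuative estimate along a Puiseux branch at infinity of $\Gamma(l_H,f)$ on which $f\to a$: writing $\grad f(x(s))=\lambda(s)\,\d l_H$, one gets $(f\circ x)'=\lambda\,(l_H\circ x)'$ and compares orders, using the genericity of $H$ twice (so that ${\rm ord}\,l_H(x(s))$ equals the growth order of $\|x(s)\|$, and so that no component lies in $f^{-1}(a)$, giving ${\rm ord}(f\circ x-a)>0$) to conclude $\|x\|\|\grad f\|\to 0$. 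Tellingly, your argument for (1) never uses $H\in\Omega_{f,a}$ at all, whereas the conclusion genuinely depends on it; this estimate is exactly the content of the cited \cite[Prop. 2.1.3 and (2.1)]{Ti-book}.

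In part (2), the forward direction is where you yourself locate the difficulty, but the step you propose does not resolve it. From an arc witnessing $\d\bt\in\cC^\ity_p$ you extract a limit direction $[H_0]$ of $[\d f]$; however, the existence of one arc with $[\d f(x(s))]\to H_0$ gives no points with $[\d f(x)]$ \emph{exactly} equal to a nearby generic $H$ accumulating at $p$. To produce such points one needs a dimension argument on the incidence variety $\overline{\{(x,[\d f(x)])\}}\subset\bX\times\check\bP^{n-1}$ (every nonempty fibre of its projection to $\check\bP^{n-1}$ has local dimension $\ge 1$ at $(p,H)$), and then one must still exclude that the resulting curve lies in $\bX^\ity$ or over $\overline{\Sing f}$; moreover $H_0$ may be forced to be non-generic: for $f=x+x^2y$ on $\bC^3$ all gradient directions lie in a fixed line of $\check\bP^{2}$, so every limit direction is degenerate and every nearby generic $H$ has \emph{empty} polar set. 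There $p$ is a non-isolated $t$-singularity, so (2) is not contradicted, but it shows that ``perturb $H_0$ to a generic $H$ and keep the arc'' has no general content, and that isolatedness must enter through an actual mechanism. In the paper this implication rests on \cite[Thm. 2.1.6, Thm. 2.1.7]{Ti-book}, i.e.\ on the localized theory at an isolated $t$-singularity (local fibration and vanishing-cycle data forcing a nontrivial local polar contribution at $p$); your use of isolatedness --- merely confining the accumulation of $t$-singular points to $p$ --- never interacts with the choice of $H$ and cannot substitute for it. The parts of your proposal that do work are the $\overline{\Sing f}$ half of (1) (a sequence of critical points violates (M) at $p$, then apply Theorem \ref{t:malgrange-t-reg}) and the reverse implication of (2), which is indeed the contrapositive of (1).
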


\begin{proof}
The result and its proof can be actually extracted from \cite[\S 2.1]{Ti-book}. More precisely:

\noindent (a) follows from \cite[Prop. 2.1.3]{Ti-book} and \cite[(2.1), pag.17]{Ti-book}.

\noindent
(b) follows by combining Thm. 2.1.7, Thm. 2.1.6 and Prop. 2.1.3 from \cite[\S 2.1]{Ti-book}.

\end{proof}

 The above theorem means that isolated $t$-singularities at infinity are precisely detected by the horizontal part of the generic polar curve.
In case $p\in \bX^\ity$ is a non-isolated $t$-singularity (which occurs whenever $n>2$), the general affine
polar curve $\Gamma(l_H,f)$ might not contain the chosen point $p$ in its closure at infinity. We shall show in the next section how to deal with this situation.

\section{Proof of Theorem \ref{t:1}}\label{s:proof}


Let $B := \Sing^\ity f \cap (\bX \setminus \cup_{a\in f(\Sing f)}\bX_a)$.
By Theorem \ref{t:malgrange-t-reg}, we have the equality: $$NK_\ity(f) =  \bt(B).$$

By Theorem \ref{t:polar}(a) and Theorem \ref{t:malgrange-t-reg}, if the generic polar curve $\Gamma(l_H,f)$ is nonempty, then it  intersects the hypersurface $\bX^\ity$ at finitely many points
and these points are $t$-singularities, hence Malgrange non-regular points at infinity.

Let us first assume that $\dim B =0$.
Then, by Theorem  \ref{t:polar}, for  $p\in B$ (which by our assumption is an isolated $t$-singularity),  the generic polar curve
 passes through $p$, so this point is ``detected'' by the horizontal part of the polar curve $\overline{\Gamma (x_1,f)}$, for some generic choice of the coordinate $x_1$ (in the sense of Definition \ref{d:polar} and Lemma \ref{l:polar}). Therefore, in the notations of the Introduction,
 the corresponding asymptotic non-regular value belongs to $J_f(H\Gamma (x_1,f))$.

Therefore,  in our case $\dim B =0$, the equality \eqref{eq:union} follows from Theorem \ref{t:polar} and Theorem \ref{t:malgrange-t-reg}.

Let us now treat the case $\dim B >0$. We will show \eqref{eq:union} by a double inclusion.

\medskip
\noindent \textbf{The inclusion ``$\supset$''.}   Let us first prove the inclusion $J_f(H\Gamma (x_i,f_{i-1})) \setminus J_f(\Sing f)  \subset NK_\ity(f)$ for each $i=1, \ldots, n-1$.
We proceed by a ``reductio ad absurdum" argument. Assume that $a \not \in NK_\ity(f)$ and denote
$\bX^\ity_a := \bX^\ity \cap \bt^{-1}(a)$.

(a). If $a\in J_f(H\Gamma (x_1,f_{0})) \setminus J_f(\Sing f)$, then there exist points $p\in \bX_a^\ity \cap \overline{\Gamma (x_1,f_{0})}$. By Theorem \ref{t:polar}(a), this means that $p$ is a $t$-non-regular point, which implies in turn that  $a \in NK_\ity(f)$, by Theorem \ref{t:malgrange-t-reg}.

(b). Assume that $a\not\in J_f(H\Gamma (x_i,f_{i-1})) \setminus J_f(\Sing f)$ for $i=1, \ldots, k-1$ (for some $k\ge 2$), and that
$a\in J_f(H\Gamma (x_k,f_{k-1})) \setminus J_f(\Sing f)$.

 We endow the hypersurface $\bX\subset \bP^n \times \bC$ with a finite complex Whitney stratification $\cW$ such that $\bX^\ity := \{ f_d = 0\} \times \bC$ is a  union of strata. Our Whitney stratification at infinity is also Thom (a$_{x_0}$)-regular, by \cite[Theorem 2.9]{Ti-top}, where $x_0 =0$ is some local equation for $H^\ity$ at $p$.

There exists a Zariski-open set  $\Omega'\subset \check
\bP^{n-1}$ of linear forms $\bC^n \to \bC$ such that, if $H\in \Omega'$, then $(H^\ity \cap \overline{H}) \times \bC$ is transversal in $H^\ity \times \bC$ to all
strata of $\cW$ in the neighbourhood of $\bX_a^\ity$.
Due to the Thom (a$_{x_0}$)-regularity of the stratification, it follows that slicing by $H\in \Omega'$  insures  the $t$-regularity of the restriction $f_{|H}$  at any point $p\in (\overline{H}\times \bC) \cap \bX_a^\ity$.  More precisely, from our hypothesis  $\d \bt \not\in \cC^\ity_{p}$  (see Definition \ref{d:t-reg}) we deduce that $\d \bt' \not\in \cC'^\ity_{p}$ for $p \in \overline{H} \cap \bX_a^\ity$, where $H\in \Omega'$, $\bt' :=  \bt_{|\overline{H}\times \bC}$ and $\cC'^\ity$ is the space of  Definition \ref{d:car_covectors_space} starting with the restriction $f_{|H}$ instead of $f$.
This implies that $a \not \in NK_\ity(f_{|H})$.

By taking $H\in \Omega' \cap \Omega_{f, a}$ we get in addition that $a \not\in J_{f}(\Sing f_{|H})$.  We denote $f_1 := f_{|H}$.

Now, if $k=2$ in our first assumption at point (b),  we may apply the reasoning (a) to $f_1$ in place of $f$ and obtain $a \in NK_\ity(f_1)$, hence a contradiction.

In case $k>2$,  after applying the slicing process (b) exactly $k-2$ more times, namely successively to $f_1, \ldots , f_{k-2}$, we arrive to the similar contradiction for $f_{k-1}$.

\medskip
\noindent \textbf{The inclusion ``$\subset$''.}
 Let  $a\in \bt(B)$ be an asymptotic non-regular value such that the set of $t$-singularities in $\bX^\ity_a$ is not isolated. More precisely, according to Definition \ref{d:t-sing},
this set is equal to $\bX^\ity_a \cap \Sing^\ity f$. From the remark after Definition \ref{d:t-sing}, it follows  that $\bX^\ity_a \cap \Sing^\ity f$ is an algebraic set.
Let therefore  $k := \dim \bX^\ity_a \cap \Sing^\ity f$ be its dimension, where $k>0$ by our assumption $\dim B>0$.  We show how to reduce $k$ one by one until zero.

For that we use two facts:

\noindent
(a). From the above proof of the first inclusion we extract the fact that if $\d \bt \not\in \cC^\ity_{p}$ then $\d \bt' \not\in \cC'^\ity_{p}$,  for any $H\in \Omega'$, where $\bt' :=  \bt_{|\overline{H}\times \bC}$.\\
\noindent
(b). Moreover, by a Bertini type argument\footnote{based on the fact that the relative conormal $T^*_{\bt_{|\cW_i}}$ is of dimension $n-1$, the same as $\check \bP^{n-1}$.}, there exists a Zariski-open set $\Omega''\subset \check \bP^{n-1}$ such that if $H\in\Omega''$ then  $H\times \bC$ is transversal to any stratum $\cW_i\subset \bX^\ity$ of the Whitney stratification except at finitely many points.

For some $H\in \Omega'\cap \Omega''$ we consider the restriction $f_{|H}$ and  the space similar to $\bX$ defined at \eqref{eq:X} attached to the polynomial function $f_{|H}$,  which we denote by $\bY$. These two facts imply the equality:
\[   \dim (\bY^\ity_a \cap \Sing^\ity f_{| H}) = \dim (\bX^\ity_a \cap \Sing^\ity f) - 1,\]
as long as $k>0$ (which is our assumption).  This shows the reduction to $k-1$.

 We thus continue to slice by generic hyperplanes and lower one by one the dimension of the set $\Sing^\ity f$ until we reach zero, thus we slice a total number of $k$ times. The restriction of $f$ to these iterated slices identifies to
the restriction $f_{k}$ defined in the Introduction.

After this iterated slicing we have $f_k$ with a nonempty set of isolated $t$-singularities at infinity over $a$, each of which are detected by the horizontal part of the polar curve $\overline{\Gamma (x_{k+1},f_{k})}$,  like shown in the first part of the above proof. We therefore get $a\in J_f(H\Gamma (x_{k+1},f_{k})$.

Altogether this shows the inclusion:
 $ NK_\ity(f) \subset \bigcup_{i=1}^{n-1} J_f(H\Gamma (x_i,f_{i-1})) \setminus J_f(\Sing f).$
Our proof of Theorem \ref{t:1} is now complete.
\fin

\subsection{Proof of Corollary \ref{c:bound}}

We estimate the number of Malgrange non-regular values
$K_\infty(f)$ given by Theorem \ref{t:1}. Let us fix a generic
system of coordinates $(x_1, \ldots ,x_n)$.  The following equations:
\begin{equation}
 \frac{\partial f_d}{\partial x_2} = 0, \ldots, \frac{\partial f_d}{\partial x_n} = 0
\end{equation}
define the algebraic set $\Gamma (x_1,f) \cup \Sing f \subset \bC^n$ of degree $(d -1)^{n-1}$.
Therefore, if nonempty,  $\Gamma (x_1,f)$ is a curve of degree $\le (d -1)^{n-1}$. After Bezout, the curve $\overline{\Gamma (x_1,f)}$
will meet a non-degenerate hyperplane, and in particular the
hyperplane at infinity, at a number of points which is bounded from above
by   $(d -1)^{n-1} - \sum_{i=1}^{r}d_{i}$. Repeating this procedure after successively slicing by  general hyperplanes like explained in the above proof, we finally add up the numbers of solutions. This gives the following sum:
 \begin{equation}\label{eq:sum}
  (d -1)^{n-1}  + (d -1)^{n-2} + \cdots + (d -1) = \frac{(d -1)^n - 1}{d-2}
 \end{equation}
to which we have to substract the sums of degrees of the positive dimensional irreducible components of $\Sing f$ and their successive slices. It follows that we substract the degree $d_i$ a number of $\dim S_i$ times which corresponds to the number of times we slice $S_i$ and drop its dimension one-by-one until we reach dimension 0.
This proves Corollary \ref{c:bound}. \fin

\medskip

\subsection{New bound for the number of atypical values at infinity}

In \cite[Corollary 1.1]{JK} one finds the following upper bound
for Malgrange non-regular values:
\begin{equation}\label{eq:bound1}
\# K_\infty(f) \le \frac{d^n - 1}{d+1}.
\end{equation}

Our estimation \eqref{eq:bound2} yields to the following one for $K_\infty(f)$:
\begin{equation}\label{eq:boundK}
  \# K_\infty(f) \le \frac{(d -1)^n - 1}{d-2} - \sum_{i=1}^r
 d_i \dim S_i \ + r .
 \end{equation}
  This is somewhat sharper than
\eqref{eq:bound1}. Both have the highest degree term $d^{n-1}$  and the coefficient of the term $d^{n-2}$ in our formula is
smaller for high values of $n$.

\section{The non-properness set and the generalized Noether lemma}\label{s:noether}

In this section we give the preliminary material which will lead to the definition in \S \ref{s-polar} of the ``super-polar curve''.

    If $f: X \to Y$ is a dominant, generically finite polynomial  map of smooth affine varieties, we  denote  by $\mu (f)$ the number of points in a generic fiber of $f$. If  $\{ x \}$  is  an  isolated
component  of  the  fiber $f^{-1}  (f(x))$, then  we  denote by ${\rm mult}_x (f)$ the
multiplicity of  $f$ at  $x$.

Let $X, Y$ be  affine varieties, recall that a  mapping
$f:X\rightarrow Y$ is {\it not proper} at  a point $y \in Y$ if
there  is  no  neighborhood $U$  of  $y$  such  that $f^{-1}
(\overline{U}))$ is compact. In other words, $f$ is  not proper at
$y$ if there is  a sequence $x_l\rightarrow\infty $ such that
$f(x_l)\rightarrow y $. Let $J_f$ denote the set of points at
which the mapping $f$ is  not proper.  The   set $J_f$ has the
following properties (see \cite{jel}, \cite{jel1}, \cite{jel3}):

\begin{theorem} \label{setSF}
 Let $X\subset \Bbb C^k$ be an irreducible variety of dimension $n$ and let
$f = (f_1,\ldots, f_m): X \rightarrow \Bbb C^m$ be a
generically-finite polynomial  mapping. Then  the  set $J_f$ is an
algebraic subset of $\Bbb C^m$ and it is either empty or it has
pure dimension $n-1$. Moreover, if $n=m$ then
 $${\rm deg} \ J_f
\leq \frac{ \deg X(\prod^n_{i=1} {\rm deg} \ f_i) - \mu (f)}
{\min_{1\le i \le n}\deg\ f_i}.$$

\end{theorem}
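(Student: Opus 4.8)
The plan is to establish the three assertions in turn: that $J_f$ is Zariski closed, that it is empty or of pure dimension $n-1$, and finally the degree estimate when $n=m$. For the first two I would compactify the graph. Write $\Gamma_f = \{(x,f(x)) : x\in X\}\subset \mathbb{C}^k\times\mathbb{C}^m$, take its closure $\overline{\Gamma_f}$ in $\mathbb{P}^k\times\mathbb{C}^m$, and let $\pi\colon \mathbb{P}^k\times\mathbb{C}^m\to\mathbb{C}^m$ be the second projection; since $\mathbb{P}^k$ is complete, $\pi$ is proper, hence closed. Setting $H^\infty\subset\mathbb{P}^k$ for the hyperplane at infinity and $\overline{\Gamma_f}^\infty := \overline{\Gamma_f}\cap(H^\infty\times\mathbb{C}^m)$, the first step is to check the identity $J_f = \pi(\overline{\Gamma_f}^\infty)$: a point $y$ lies in $J_f$ precisely when there is a sequence $x_l\to\infty$ in $X$ with $f(x_l)\to y$, equivalently a point of $\overline{\Gamma_f}$ over $y$ whose first factor lies on $H^\infty$. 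This already shows $J_f$ is algebraic, and since $\overline{\Gamma_f}$ is pure of dimension $n$ and $\overline{\Gamma_f}^\infty$ is its divisor at infinity, $\dim\overline{\Gamma_f}^\infty = n-1$ and therefore $\dim J_f\le n-1$.

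The delicate point, and the main obstacle, is purity: every irreducible component of $J_f$ has dimension exactly $n-1$. The upper bound is free from the compactification; the lower bound on each component is the genuine content. The mechanism is that non-properness is a \emph{divisorial} phenomenon: after normalizing and invoking Zariski's main theorem, a dominant generically finite morphism of affine varieties of the same dimension fails to be finite only along a codimension-one locus, so its non-properness set is empty or pure of codimension one; the case $m>n$ is reduced to the equidimensional one by passing to $\overline{f(X)}$, which has dimension $n$. The geometric picture behind this is Jelonek's structural result that $J_f$ is $\mathbb{C}$-uniruled: through every point of $J_f$ passes the image of a nonconstant map $\mathbb{C}\to\mathbb{C}^m$, obtained by deforming the escaping sequence witnessing non-properness at $y$ into a one-parameter family of arcs at infinity of $\overline{\Gamma_f}$ whose projection sweeps a rational curve through $y$. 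Reconstructing this uniruling from scratch is where the real work lies, and I would import the purity statement as the substantive content of \cite{jel}, \cite{jel1}, \cite{jel3} rather than reprove it.

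For the degree bound assume $n=m$ and set $d_i=\deg f_i$, reordered so that $d_1=\min_i d_i$. Since $J_f$ is a reduced hypersurface, $\deg J_f$ equals the number of its intersection points with a generic affine line $L\subset\mathbb{C}^n$, which I would parametrize by its first coordinate $t=y_1$ together with relations $y_j=\alpha_j y_1+\beta_j$ for generic $\alpha_j,\beta_j$. Then $C:=f^{-1}(L)\subset X$ is cut out on $X$ by the $n-1$ equations $f_j-\alpha_j f_1-\beta_j=0$, each of degree $\max(d_j,d_1)=d_j$, so Bezout gives $\deg\overline{C}\le \deg X\prod_{j=2}^n d_j = \deg X\prod_i d_i/d_1$. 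The restriction $g:=f_1|_C\colon C\to\mathbb{C}$ extends on the normalization to $\widetilde g\colon \widetilde C\to\mathbb{P}^1$ of degree $\mu(f)$, because a generic fibre $g^{-1}(t)=f^{-1}(L\cap\{y_1=t\})$ is a single generic fibre of $f$; and the points of $J_f\cap L$ are exactly the finite values taken by $\widetilde g$ on the branches of $C$ at infinity.

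The count is then closed by a local estimate. Write $B_\infty$ for the branches at infinity with $\widetilde g=\infty$ and $B_{\mathrm{fin}}$ for those with finite value, and let $i_p$ be the intersection multiplicity of the branch $p$ with $H^\infty$; summing over all branches gives $\sum_{p\in B_\infty} i_p + \sum_{p\in B_{\mathrm{fin}}} i_p = \overline C\cdot H^\infty = \deg\overline C$. Parametrizing a branch by $\tau\to 0$, the homogenizing coordinate vanishes to order $i_p$, so each affine coordinate has a pole of order $i_p$ and the degree-$d_1$ polynomial $f_1$ has pole order $\nu_p\le d_1\,i_p$ there; since these pole orders sum to the degree $\mu(f)$ of $\widetilde g$ over $\infty$, we get $\sum_{p\in B_\infty} i_p\ge \mu(f)/d_1$. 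As each $i_p\ge 1$, this yields $\deg J_f\le \#B_{\mathrm{fin}}\le \sum_{p\in B_{\mathrm{fin}}} i_p\le \deg\overline C-\mu(f)/d_1\le (\deg X\prod_i d_i-\mu(f))/\min_i d_i$, which is the claimed inequality. The two steps I expect to need the most care are verifying the pole-order estimate $\nu_p\le d_1 i_p$ via an explicit local parametrization of a branch at infinity, and choosing $L$ generic enough to avoid the lower-dimensional strata of $J_f$ and to meet it transversally in $\deg J_f$ reduced points.
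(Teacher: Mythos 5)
The paper does not actually prove Theorem~\ref{setSF}: it is quoted as a known result, with the proof residing in the cited works \cite{jel}, \cite{jel1}, \cite{jel3}. So your proposal cannot be compared to an internal argument and must be judged on its own merits. Your first part is correct and standard: the identification $J_f=\pi\bigl(\overline{\Gamma_f}\cap(H^\infty\times\mathbb{C}^m)\bigr)$, with $\pi$ proper, gives algebraicity and $\dim J_f\le n-1$ (one small imprecision: $\overline{\Gamma_f}\cap(H^\infty\times\mathbb{C}^m)$ need not be a nonempty divisor --- it is empty exactly when $f$ is proper --- though if nonempty it is pure of dimension $n-1$ by Krull, and in any case the bound $\dim J_f \le n-1$ survives). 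For purity you correctly identify it as the substantive content and then explicitly import it from \cite{jel}, \cite{jel1}; since those are the very sources the theorem is quoted from, this is defensible in context --- the paper does the same --- but it does mean your proposal leaves unproved precisely the point you yourself single out as the delicate one, with only a heuristic (uniruledness, Zariski's Main Theorem) in its place.

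The degree bound is where you supply real content, and the generic-line slicing scheme is sound, but it contains one genuinely unjustified step: the assertion that ``the points of $J_f\cap L$ are exactly the finite values taken by $\widetilde g$ on the branches of $C$ at infinity.'' The inclusion your count requires, $J_f\cap L\subset J_{f|_{f^{-1}(L)}}$, is not automatic: non-properness of $f$ at $y\in L$ is witnessed by sequences that have no reason to lie on $f^{-1}(L)$. This is exactly the content of Proposition~\ref{q-f} of the present paper, which however assumes $X$ normal and $f$ quasi-finite; in your generality you must first pass to the normalization (harmless, since normalization is finite and does not change the non-properness set) and choose $L$ to avoid the image of the positive-dimensional-fiber locus of $f$, a set of codimension $\ge 2$ in the target because $f$ is generically finite. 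That same genericity is also needed to ensure every component of $C=f^{-1}(L)$ is a horizontal curve: a component on which $f_1$ is constant would sit in a fiber of $f$ and would wreck your computation $\deg\widetilde g=\mu(f)$. Your stated genericity caveat covers only transversality of $L$ to $J_f$, not this transfer of non-properness, which is the step that actually carries the argument. Once that lemma is in place, the remainder is correct: $\sum_p i_p=\deg\overline C$ against $H^\infty$ (Bezout applies to any hyperplane meeting $\overline C$ properly), the pole-order estimate $\nu_p\le d_1 i_p$, the identity $\sum_{p\in B_\infty}\nu_p=\mu(f)$, and the refined Bezout bound $\deg\overline C\le\deg X\prod_i d_i/d_1$ combine exactly as you say to give the stated inequality.
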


In the case of a polynomial map of normal affine varieties it  is
easy to  show  the following:

\begin{proposition}\label{q-f}
Let $f: X \rightarrow Y$ be a dominant   and quasi-finite
polynomial map of normal affine varieties. Let $Z\subset Y$ be an
irreducible subvariety which is not contained in $J_f$. Then every
component of the set $f^{-1}(Z)$ has dimension dim $Z$,  and if $g$ denotes the restriction of $f$ to $f^{-1}(Z)$,  then
$$J_g=J_f\cap Z.$$
\fin
\end{proposition}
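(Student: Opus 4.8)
The plan is to reduce the statement to the study of a finite morphism via Zariski's Main Theorem, and then to read off both assertions from the elementary theory of integral extensions combined with the geometric description of the non-properness set coming from \cite{jel}, \cite{jel1} and Theorem \ref{setSF}. Since $f$ is dominant and quasi-finite, $\dim X=\dim Y=:n$ and the field extension $\bC(Y)\hookrightarrow\bC(X)$ is finite. I would let $\overline{X}$ be the normalization of $Y$ in $\bC(X)$, which produces a finite (hence proper) morphism $\pi:\overline{X}\to Y$ together with a factorization $f=\pi\circ j$ in which $j:X\hookrightarrow\overline{X}$ is an open immersion (this is where normality of $X$ is used). Because $\pi$ is proper, a point $y$ lies in $J_f$ exactly when some sequence in $X$ escaping to infinity has image tending to $y$, i.e. when such a sequence accumulates in $\overline{X}$ on $\overline{X}\setminus X$; this gives the identity $J_f=\pi(\overline{X}\setminus X)$, consistent with $J_f$ being algebraic.

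For the dimension assertion, I would write $\pi^{-1}(Z)=V(\mathfrak p B)$, where $B=\mathcal{O}(\overline{X})$, $A=\mathcal{O}(Y)$, and $\mathfrak p\subset A$ is the prime defining $Z$. Since $A\hookrightarrow B$ is a finite (integral) extension of domains, by lying over and going up each minimal prime $\mathfrak q$ over $\mathfrak p B$ contracts exactly to $\mathfrak p$ and satisfies $\dim B/\mathfrak q=\dim A/\mathfrak p=\dim Z$. Geometrically this says that every irreducible component of $\pi^{-1}(Z)$ has dimension $\dim Z$ and is mapped \emph{onto} $Z$ by the finite morphism $\pi$. As $f^{-1}(Z)=\pi^{-1}(Z)\cap X$ is open in $\pi^{-1}(Z)$, each of its irreducible components is a nonempty open subset of one such component, hence has dimension $\dim Z$. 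This settles the first claim.

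For the equality $J_g=J_f\cap Z$, I would note that $\pi$ restricts to a proper map $\pi^{-1}(Z)\to Z$ extending $g$, with boundary $\pi^{-1}(Z)\setminus X=\pi^{-1}(Z)\cap(\overline{X}\setminus X)$; applying the same description of the non-properness set to this proper extension yields $J_g=\pi\big((\overline{X}\setminus X)\cap\overline{f^{-1}(Z)}\big)$. The inclusion $J_g\subseteq J_f\cap Z$ is then immediate, since $\overline{f^{-1}(Z)}\subseteq\pi^{-1}(Z)$ and $\pi(\overline{X}\setminus X)=J_f$. The crux — and the step I expect to be the main obstacle — is the reverse inclusion, which amounts to proving $\overline{f^{-1}(Z)}=\pi^{-1}(Z)$, that is, that no irreducible component of $\pi^{-1}(Z)$ is entirely contained in $\overline{X}\setminus X$. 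This is exactly where the hypothesis $Z\not\subseteq J_f$ enters decisively: by the previous paragraph every component $C$ of $\pi^{-1}(Z)$ satisfies $\pi(C)=Z$, so a component $C\subseteq\overline{X}\setminus X$ would force $Z=\pi(C)\subseteq\pi(\overline{X}\setminus X)=J_f$, a contradiction. Hence every component of $\pi^{-1}(Z)$ meets $X$ and lies in $\overline{f^{-1}(Z)}$, so $\overline{f^{-1}(Z)}=\pi^{-1}(Z)$ and therefore $J_g=\pi\big((\overline{X}\setminus X)\cap\pi^{-1}(Z)\big)=\pi(\overline{X}\setminus X)\cap Z=J_f\cap Z$, as required.
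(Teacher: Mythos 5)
Your proposal is correct and follows essentially the same route as the paper's own proof: Grothendieck's version of Zariski's Main Theorem produces the finite extension $F\colon\overline{X}\to Y$ with $X\subset\overline{X}$ dense, one observes $J_f=F(\overline{X}\setminus X)$, the dimension claim comes from finiteness, and $J_g=F\bigl((\overline{X}\setminus X)\cap F^{-1}(Z)\bigr)=F(\overline{X}\setminus X)\cap Z$ --- your only real addition is to make explicit the step the paper leaves implicit, namely that $Z\not\subset J_f$ forbids any component of $F^{-1}(Z)$ from lying entirely in $\overline{X}\setminus X$, which is indeed the crux of the equality $\overline{f^{-1}(Z)}=F^{-1}(Z)$. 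One small correction: the fact that every minimal prime $\mathfrak{q}$ over $\mathfrak{p}B$ contracts exactly to $\mathfrak{p}$ (equivalently, that every component of $\pi^{-1}(Z)$ dominates $Z$) is the \emph{going-down} theorem --- valid here because $\mathcal{O}(Y)$ is integrally closed (normality of $Y$) and $B$ is a domain --- not a consequence of lying over and going up, and this is precisely where the normality hypothesis enters, since without it a component of $\pi^{-1}(Z)$ could have dimension strictly smaller than $\dim Z$.
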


\begin{proof}
By the Zariski Main Theorem in version of Grothendieck, there is
an affine variety $\overline{X}$, which contains $X$ as a dense
subset and a regular finite mapping $F:\overline{X}\to Y$ such
that $F_{|X}=f.$ Since  the   mapping $F$ is finite, all
components of $F^{-1}(Z)$ have dimension dim $Z.$ Now the
condition $Z\not\subset J_f$ implies that  all components of
$f^{-1}(Z)$ have dimension dim $Z.$ Let $S:=\overline{X}\setminus
X.$ Observe that $J_f=F(S).$ Moreover, $J_g=F(S\cap
F^{-1}(Z))=F(S)\cap Z.$
\end{proof}

Let $M^n_m$ denotes the set of all linear forms $L:
\bC^m\to \bC^n$. We need the following result,
which is a modification of   \cite[Lemma 4.1]{jel5}:

\begin{proposition}\label{noether}\rm (Generalized Noether Lemma) \it \\
Let $X\subset \Bbb C^m$ be an affine variety of dimension $n$. Let
$A\subset \Bbb C^m$ be a line and $B\subset X$ be a subvariety
such that $A\not\subset B$. Let $x_1 : \bC^m \to \bC$ be a linear
projection and assume  that $x_1$  is non-constant on $X$ and on
$A$.  Let $a_1,\ldots,a_s\in A\cap X$ be some fixed set of points.

 There exist a Zariski open dense subset $U\subset
 M_m^{n-1}$ such that for every $(n-1)$-tuple $(L_1,\ldots,L_{n-1})\in U$ the
mapping  $\Pi = (x_1,L_1,\dots,L_{n-1}): X\to \Bbb C^{n}$ satisfies
the following conditions:
\begin{enumerate}
\rm \item \it the fibers of $\Pi$ have dimension at most one,

\rm  \item \it there is a  polynomial $\rho\in \Bbb C[t_1]$ such that
$$J_\Pi=\{ (t_1,\dots,t_{n})\in \Bbb C^{n} \mid \rho (t_1)=0\},$$

\rm  \item \it $\Pi(A)\not\subset \Pi(B),$

\rm  \item \it all fibers $\Pi^{-1}(\Pi(a_i)), \ i=1,\ldots, s$ are finite and
non-empty.
\end{enumerate}
\end{proposition}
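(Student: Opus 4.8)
The plan is to produce $U$ as a finite intersection of Zariski-dense open subsets of $M_m^{n-1}$, one for each of the four conditions, and to verify that each condition is an open dense requirement on the tuple $(L_1,\dots,L_{n-1})$. Throughout I would regard $\Pi=(x_1,L_1,\dots,L_{n-1})$ and $\Lambda=(L_1,\dots,L_{n-1})$ as linear projections $\bC^m\to\bC^n$ and $\bC^m\to\bC^{n-1}$, with kernels $\ker\Pi\subset\ker x_1$ of dimension $m-n$ and $\ker\Lambda$ of dimension $m-n+1$; I write $\overline X\subset\bP^m$ for the projective closure, $H_\infty$ for the hyperplane at infinity, $X_\infty=\overline X\cap H_\infty$ (of dimension $n-1$) and $P=\{x_1=0\}\cap H_\infty$. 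The backbone is Theorem \ref{setSF}, which guarantees that $J_\Pi$ is empty or purely $(n-1)$-dimensional; the real content of the proposition is to pin down its shape, namely (b).

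Conditions (a), (c), (d) I expect to be routine applications of generic linear projections. For (a) it suffices that $\Lambda$ have fibres of dimension $\le 1$, since $\Pi^{-1}(t)\subset\Lambda^{-1}(t_2,\dots,t_n)$; a fibre of $\Lambda$ is $X$ intersected with a translate of $\ker\Lambda$, and as $\dim X+\dim\ker\Lambda=m+1$ a generic choice of $\ker\Lambda$ forces every such intersection to have dimension $\le 1$ (the tuples producing a $2$-dimensional slice form a subset of positive codimension in $M_m^{n-1}$). For (c) I would pick $a\in A\setminus B$; then $\Pi(a)\in\Pi(B)$ exactly when $\ker\Pi$ meets the translate $a-B$, and since $\dim\ker\Pi+\dim(a-B)<m$ (using $\dim B<n$) a generic $\ker\Pi$ avoids it, whence $\Pi(a)\notin\Pi(B)$; as $x_1$ is non-constant on $A$ the image $\Pi(A)$ is genuinely a line, so $\Pi(A)\not\subset\Pi(B)$. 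For (d), non-emptiness is automatic because $a_i\in\Pi^{-1}(\Pi(a_i))$, while finiteness holds because $\Pi^{-1}(\Pi(a_i))=X\cap(a_i+\ker\Pi)$ has expected dimension $n+(m-n)-m=0$, so a generic $\ker\Pi$ makes each of the finitely many such slices $0$-dimensional. Each is an open dense condition, and I would intersect them.

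The heart is (b), and here the plan is to reduce it to the assertion that $t_1(J_\Pi)\subset\bC$ is finite. Indeed, once $t_1(J_\Pi)=\{c_1,\dots,c_k\}$ is finite, every irreducible component of $J_\Pi$ maps to a single $c_j$ under $t_1$, and being $(n-1)$-dimensional inside the irreducible hyperplane $\{t_1=c_j\}$ it must equal it; hence $J_\Pi=\bigcup_j\{t_1=c_j\}$ and we take $\rho(t_1)=\prod_j(t_1-c_j)$ (empty product covering the case $J_\Pi=\emptyset$). To prove finiteness I would analyse a non-proper sequence $x_l\to\infty$ in $X$ with $\Pi(x_l)\to(c,b)$. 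After passing to a subsequence $x_l\to z\in X_\infty$; boundedness of $x_1$ forces the leading part of $x_1$ to vanish at $z$, i.e. $z\in P$, and boundedness of the $L_i$ forces the leading parts of the $L_i$ to vanish at $z$, so altogether $z\in X_\infty\cap(\overline{\ker\Pi}\cap H_\infty)$. Such $z$ lie on the components $V$ of $X_\infty$ contained in $P$ (these ``vertical'' components are precisely what creates non-properness; away from them the dimension count gives an empty intersection, hence no non-properness). Inside $P\cong\bP^{m-2}$ one has $\dim V+\dim(\overline{\ker\Pi}\cap H_\infty)-\dim P=(n-1)+(m-n-1)-(m-2)=0$, so for a generic tuple the centre meets each $V$ in finitely many points, by a Kleiman/Bertini transversality argument carried out inside the fixed space $P$. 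Finally the asymptotic value $c=\lim x_1(x_l)$ is the value at $z$ of the rational extension of $x_1$, which becomes regular along $V$ after blowing up the indeterminacy locus $\{x_1=0\}\cap\{x_0=0\}$, hence is one of finitely many numbers. This gives the finiteness of $t_1(J_\Pi)$.

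The main obstacle, as the above makes clear, is exactly the interplay in (b) between the \emph{fixed} coordinate $x_1$ and the components of $X_\infty$ lying in $\{x_1=0\}$: it is along these that $x_1$ is indeterminate at infinity and where the non-proper values are born, so I cannot simply invoke ``a generic projection is proper''. I must instead argue that, with $x_1$ held fixed and only $(L_1,\dots,L_{n-1})$ varying, the centre $\overline{\ker\Pi}\cap H_\infty$ still meets each vertical component $V$ transversally in finitely many points, and that the accompanying asymptotic value of $x_1$ is finite and assumes finitely many values there. One may alternatively package this slice by slice through Proposition \ref{q-f}: for $c$ outside the finite bad set the restriction $\Pi|_{X\cap\{x_1=c\}}$ is proper, giving $J_\Pi\cap\{t_1=c\}=\emptyset$, whereas for the bad values the whole hyperplane appears. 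Once (b) is secured with its finite set $\{c_j\}$, the polynomial $\rho$ is explicit and conditions (a), (c), (d) only shrink $U$ by finitely many further open dense constraints, so the intersection $U$ remains Zariski-dense and the proof is complete.
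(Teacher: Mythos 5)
Your overall route is sound and genuinely different from the paper's on the key point (b). The paper proves (b) by elimination: it adjoins an auxiliary generic linear form $l_n$, uses an algebraic dependence $W(x_1,l_1,\dots,l_n)=0$ on $X$, and applies the shear $L_i=l_i-\alpha_i l_n$ so that the resulting relation (\ref{*}) has leading coefficient $\rho(x_1)$ depending on $x_1$ alone; then $\Pi$ factors through the finite map $P=(x_1,L_1,\dots,L_{n-1},l_n)$ followed by a coordinate projection $\pi$, the set $J_\pi$ is read off as $\{\rho(t_1)=0\}$, and a constructibility argument shows the good tuples $(L_1,\dots,L_{n-1})$ fill a dense open subset of $M_m^{n-1}$. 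This buys an explicit $\rho$ (in line with the paper's effectivity theme) and the openness of $U$ almost for free. You instead reduce (b) to finiteness of $t_1(J_\Pi)$ via the purity statement of Theorem \ref{setSF} --- a clean and correct reduction (though you should first check that $\Pi$ is generically finite for generic $L$ before invoking that theorem, which is standard since $x_1$ is non-constant on $X$) --- and then argue at infinity with the graph closure of $x_1$ and transversality inside $P=\{x_1=0\}\cap H^\infty$. Your identification of the vertical components of $\overline{X}\cap H^\infty$ as the only source of non-properness, and the dimension counts inside $P$, are right, and are legitimate because $\bP(\ker\Pi)$ is a generic codimension-$(n-1)$ linear subspace of $P$ as $(L_1,\dots,L_{n-1})$ varies with $x_1$ fixed.

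Two steps need repair. First, in (b), the assertion that ``the asymptotic value $c$ is the value at $z$ of the rational extension of $x_1$, hence one of finitely many numbers'' is not valid pointwise: over a point $z$ of a vertical component $V$, the fiber of the graph closure of $x_1$ (equivalently, of the strict transform under your blow-up) is a closed subset of $\bP^1$ and can be all of $\bP^1$, in which case infinitely many limits $c$ occur at that single $z$. You must add that the locus of such bad $z$ is a proper closed subset of each $V$ (the graph closure maps generically finitely onto $V$), and that genericity of $(L_1,\dots,L_{n-1})$ moves the finitely many points of $V\cap\bP(\ker\Pi)$ off this bad locus; only then do finitely many $z$ each contribute finitely many values $c$. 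Second, your avoidance count for (c) is wrong as written: any witness $b\in B$ with $\Pi(b)=\Pi(a)$ automatically satisfies $x_1(b)=x_1(a)$, so the relevant set is $(B_a-a)\subset\ker x_1$ where $B_a:=B\cap x_1^{-1}(x_1(a))$, while $\ker\Pi$ is constrained to lie in $\ker x_1$; the correct count in the true ambient $\ker x_1\cong\bC^{m-1}$ is $(m-n)+(\dim B_a+1)\le m-1$, which holds only when $\dim B_a\le n-2$ and fails precisely when $B$ has a component inside the fiber $x_1^{-1}(x_1(a))$. The paper handles exactly this by first choosing $a\in A\setminus B$ with $\dim B_a<n-1$ --- possible because $x_1$ is non-constant on $A$, so $x_1(a)$ can avoid the finitely many bad values --- and then projectivizing the cone $C_aB_a$ at infinity. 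With these two local repairs your argument goes through and yields the proposition by a compactification-and-transversality method rather than the paper's elimination-theoretic one.
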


\begin{proof}
For any $Z \subset \bC^m$, denote by $\tilde Z$ the projective
closure of $Z$ in $\bP^m$, and let  $H^\infty$ denote  the hyperplane at infinity.
Then
$\dim  \tilde{X} \cap  H^\infty= n-1$.

Hence there is a non-empty Zariski open subset $U_1\subset  M_m^{n-1}$ of $(n-1)$-tuples of linear forms such
that for any $L=(l_1,\ldots,l_{n-1})\in U_1$ we have
      $\dim \tilde{X} \cap H^\infty \cap \ker \ L \le 0$.

Let $l_{n}$ be a general linear form.  Since the $(n+1)$ linear forms $(x_1, l_1, \ldots , l_{n})$ are
algebraically dependent on $X$, there exists a non-zero polynomial
$W\in \Bbb C[T,T_1,\dots,T_{n}]$ such that
we have $W(x_1,l_1,\dots,l_{n})= 0$ on $X$. Let us define:

\begin{equation}\label{eq:L}
 L_i :=l_i-\alpha_i l_{n}, \mbox{ for } i=1, \ldots , n-1 ; \  \alpha_i\in \bC^*.
\end{equation}

 Operating on $W$ the linear change of coordinates $l_i \mapsto L_i$, for sufficiently general coefficients $\alpha_i\in \bC$, we then get a relation:
\begin{equation}\label{*}
 l_{n}^N \rho(x_1) + \sum^{N}_{j=1} l_{n}^{N-j}
A_j(x_1,L_1,\dots,L_{n-1})=0,
\end{equation}
where $N$ is some positive integer, $\rho$ and $A_j$ are polynomials, such that $\rho \not\equiv 0$.

 The map  $P=(x_1,L_1,\dots,L_{n-1},l_{n}) : X\to \Bbb C^{n+1}$ is finite and proper, since
$(L_1,\dots, L_{n-1}, l_{n})$ is so. Let $X':=P(X)$ and consider the
projection:
\[\pi : X' \to \bC^{n}, \ \ (x_1,\dots,x_{n+1})\mapsto (x_1,\dots, x_{n}).\]
 Note that the mapping $\pi$ has fibers of dimension at
most one. From the above constructions it follows that the non-properness locus of the projection $\pi$ is:
 \[J_\pi=\{ (t_1,\dots,t_{n})\in \Bbb C^{n} \mid \rho (t_1)= 0\},\]
for the polynomial $\rho\in \Bbb C[t_1]$ defined by the relation \eqref{*}, since $J_\pi$ is precisely the locus of the values of $(x_1, L_1, \ldots , L_{n-1})$ such that the equation \eqref{*} has less than $N$ solutions for $l_n$, counted with multiplicities.

Let us remark that the genericity conditions on $(l_1,\ldots ,l_{n-1},
l_{n})\in  M_m^{n}$ and the condition that $(\alpha_1,\ldots ,
\alpha_{n-1})$ ensure the non-triviality of the polynomial $\rho$
in (\ref{*}), yield a constructible subset $S$ of $\bC^{n-1}\times
M_m^{n}.$ The algebraic mapping:
\[  \Psi: S \to M_m^{n-1}, \ \ \ (\alpha_1,\ldots, \alpha_{n-1}; l_1,\ldots,l_{n-1}, l_{n})\mapsto (L_1,\ldots ,L_{n-1})\]
where $L_i$ are defined in \eqref{eq:L},
 has a
constructible image  $\Psi(S) \subset M_m^{n-1}$ which contains $U_1$ in its closure, thus  $\Psi(S)$ contains a non-empty Zariski-open subset $U_2$ of  $ M_m^{n-1}$.

We thus obtain (a) and (b) for $U:= U_2$ and for $\Pi :=\pi\circ P$.

\medskip

Next, let us show that there is a non-empty  Zariski open subset
included in $U_2$ such that condition (c) is also satisfied.

 Note that dim $B\le n-1$. Moreover, there is a point $a \in A \setminus B$, such that the dimension of  $B_a:= B\cap
x_1^{-1}(x_1(a))$ is $<n-1$. Let $\Lambda\subset \bC^m$ be the Zariski
closure of the cone over $B_a$ with vertex $a$,  $C_a B_a := \bigcup_{x\in B_a} \overline{ax}$, which is of dimension $\le n-1$. Hence
\[
 \dim \tilde{\Lambda}  \cap H^\infty  < n-1.        
\]
Consequently, there is a Zariski open subset $U_3\subset U_2$ such
that for $L=(L_1,\ldots,L_{n-1})\in U_3$ we have
      dim $ \tilde{\Lambda} \cap H^\infty \cap \ker L=\emptyset$.
    This means that for $\Pi :=(x_1,L_1,\ldots,L_{n-1})$ we have  $\Pi(a)\not\in \Pi(B)$, which finishes the proof of (c).

Let us finally show that there is an eventually smaller non-empty
Zariski open subset $U\subset U_3$ such that (d) is satisfied too.
Let $D_i :=x_1^{-1}(x_1(a_i))$, for $i= 1, \ldots , s$. Since
$\dim D_i = n-1$, the Zariski closure $D$ of
 $\bigcup_{i=1}^s D_i$ has dimension $n-1$. Hence
\[
\dim H^\infty \cap \tilde{D} < n-1.
\]
Like in the above argument,  there is a Zariski open subset
$U\subset U_3$ such that for $L=(L_1,\ldots,L_{n-1})\in U$ we have
      dim $\tilde{D} \cap H^\infty \cap \ker L =\emptyset$.
      Consequently, for any $i= 1, \ldots , s$, the fiber  $\Pi^{-1}(\Pi(a_i))$ is finite and
       non-empty.
\end{proof}

\begin{definition}\label{d:base}
In the notations of Proposition \ref{noether}, we call \emph{base-set  of non-properness of linear projections} of $X$ with respect to $x_1$, the set:
\[ B(x_1,X):=\bigcap_{L\in U}  J_{(x_1,L)}.\]
\end{definition}

\begin{remark}
If non-empty, the set $B(x_1,X)$ is a finite union
 of hyperplanes of the form $\{b_i\}\times \bC^{n-1}$, by Proposition \ref{noether}(b).
\end{remark}

\section{Super-polar curve and proof of Theorem \ref{t:2}
}\label{s-polar}
We have defined at \eqref{eq:superpolar} the \emph{super-polar curve $\Gamma_f(a,b)$}
as the Zariski closure  of $V(g_1,\ldots, g_{n-1})\setminus \Sing(f)$, where

\begin{equation}\label{eq:super}
g_i(a,b):=\sum_{j=1}^n a_{ij}\frac{\partial f}{\partial x_j}+\sum_{j,k =1}^n b_{ijk} x_k \frac{\partial f}{\partial x_j}, \ \ i=1,\ldots,n-1.
\end{equation}

That for  general $a_{ij}, b_{ijk}\in \bC$  this is indeed a non-degenerate curve  follows in particular from the next result, which is equivalent to Theorem \ref{t:2}.
Let us recall that
$H\Gamma_f(a,b)$ denotes the horizontal part of $\Gamma_f(a,b)$.

\begin{theorem}\label{t:mainsuperpolar}
There is a Zariski open non-empty set $\Omega$ in the space of
parameters $(a,b)\in \bC^{n(n+1)}$ such that:
\begin{enumerate}
\rm  \item for $(a,b)\in \Omega$ the set $\Gamma_f(a,b)$ is a non-empty curve,
\item $NK_\ity(f) \subset  J_f({H\Gamma_f(a,b)})$.
\end{enumerate}
\end{theorem}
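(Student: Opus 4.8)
The plan is to reduce everything to the identity $NK_\infty(f) = \bt(B)$ established in the proof of Theorem \ref{t:1}, where $B = \Sing^\infty f \cap (\bX \setminus \bigcup_{c \in f(\Sing f)}\bX_c)$, and then to show that a single sufficiently general super-polar curve reaches, in its closure at infinity, every $t$-singularity lying over a non-trivial value. Writing $g_i = \langle w_i(x), \grad f(x)\rangle$ with the affine-linear vector field $w_i(x) := (a_{i1} + \sum_k b_{i1k}x_k, \ldots, a_{in} + \sum_k b_{ink}x_k)$, I read $\Gamma_f(a,b)$ as the locus off $\Sing f$ where $\grad f$ is orthogonal to the $(n-1)$-plane spanned by $w_1(x), \ldots, w_{n-1}(x)$. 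For generic $(a,b)$ this span is $(n-1)$-dimensional, so the orthogonality pins $\grad f$ to a prescribed line; this is one condition beyond the ordinary polar construction $\Gamma_f(a,0)$ (which is itself an ordinary polar curve), the decisive new feature being that the prescribed direction now varies affinely with $x$ and hence acquires a nonzero ``radial'' component at infinity.

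For part (1), I would run a Bertini/transversality argument on the incidence variety $\{(x,(a,b)) : g_1(x) = \cdots = g_{n-1}(x) = 0\}$ inside $(\bC^n \setminus \Sing f) \times \bC^{n(n+1)}$: since each $g_i$ is linear in its own parameters $a_{i\cdot}, b_{i\cdot\cdot}$ and is nonconstant because $\grad f \neq 0$ off $\Sing f$, the $n-1$ equations are generically independent, so for $(a,b)$ in a Zariski-open $\Omega_0$ the fibre is smooth of pure dimension one. Non-emptiness follows from projective dimension theory: after homogenizing, the $n-1$ hypersurfaces of degree $\le d$ meet in $\bP^n$ in a set of dimension $\ge 1$, and for generic parameters this set is neither absorbed into $\Sing f$ nor pushed entirely into the hyperplane at infinity, leaving a genuine affine curve after deleting $\Sing f$.

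The heart is part (2): to prove that for every $t$-singularity $p \in \bX^\infty \cap \bt^{-1}(a)$ with $a \in NK_\infty(f)$ one has $p \in \overline{\Gamma_f(a,b)}$ for generic $(a,b)$. By Theorem \ref{t:malgrange-t-reg}, $p$ being $t$-singular means $\d\bt \in \cC^\infty_p$, i.e.\ there is a sequence $x_l \to p$ along which $\grad f(x_l)$ converges projectively to a characteristic covector ``containing the $\bt$-direction''. Passing to the divisor at infinity, the leading part of $g_i$ is the degree-$d$ form $\sum_{j,k} b_{ijk} x_k \,\partial f_d/\partial x_j$, and the family of such forms contains the Euler combinations $\sum_k x_k\,\partial f_d/\partial x_k = d\,f_d$; thus, unlike the constant-direction ordinary polar curve, the leading super-polar equations genuinely see $f_d$, hence the value $a$. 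I would then show that, precisely because $w_i(x)$ carries both a constant and a linear-in-$x$ part, the $n-1$ generic conditions $\langle w_i(x), \grad f\rangle = 0$ admit a branch tending to $p$ exactly when $\d\bt \in \cC^\infty_p$; this extra radial freedom is what lets one fixed curve meet the whole, possibly positive-dimensional, set $\bX^\infty \cap \bt^{-1}(a) \cap \Sing^\infty f$, curing the defect of the ordinary polar curve noted after Theorem \ref{t:polar}. Once $p \in \overline{\Gamma_f(a,b)}$ is secured, I note that for generic $(a,b)$ there is no component of $\Gamma_f(a,b)$ contained in a smooth fibre $f^{-1}(c)$ with $c \notin f(\Sing f)$ (such a component would force all $n-1$ generic fields $w_i$ to be everywhere tangent to $f^{-1}(c)$ along a curve, a non-generic degeneracy), so the branch through $p$ is horizontal and $a = \bt(p) = \lim f(x_l)$ with $x_l \to \infty$ along $H\Gamma_f(a,b)$, giving $a \in J_f(H\Gamma_f(a,b))$.

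The main obstacle is exactly this covector computation at infinity: proving that the generic affine-linear family $\{w_i\}$ realizes $\d\bt$ at every $t$-singular $p$, including the non-isolated ones, and doing so with one fixed $(a,b)$ simultaneously for all the finitely many relevant values $a$. Here I expect to need a Whitney and Thom $(\mathrm{a}_{x_0})$-regular stratification of $\bX^\infty$ together with a dimension count on the relative conormal, parallel to the Bertini argument in the ``$\subset$'' part of the proof of Theorem \ref{t:1}, guaranteeing that a generic $(a,b)$ avoids all bad strata yet still forces incidence with each stratum of $\Sing^\infty f$. Finally, intersecting the finitely many Zariski-open conditions ($\Omega_0$, non-emptiness, absence of non-trivial vertical components, and covector-incidence for each value) yields the required non-empty $\Omega$, while the non-properness bookkeeping, namely that $J_f$ restricted to a horizontal curve is a finite set of values by Theorem \ref{setSF} and restricts well by Proposition \ref{q-f}, packages the branches into the inclusion \eqref{eq:union2}.
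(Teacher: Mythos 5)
There is a genuine gap, and it sits exactly where you yourself locate the ``main obstacle'': the decisive step --- that one fixed generic super-polar curve reaches infinity, horizontally, over every value of $NK_\ity(f)$ --- is only announced (``I would then show\dots'', ``I expect to need\dots''), never carried out, and it is the entire content of the theorem. Worse, the incidence you aim for is overstated to the point of being impossible: you ask that $\overline{\Gamma_f(a,b)}$ contain \emph{every} $t$-singular point $p\in\bX^\ity\cap\bt^{-1}(a)\cap\Sing^\ity f$. The closure of a curve meets $\bX^\ity$ in finitely many points, while this $t$-singular locus can be positive-dimensional --- and that is precisely the regime the theorem is designed for (it is why Theorem \ref{t:1} needs $n-1$ successive polar curves rather than one). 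Read instead as ``for each fixed $p$ a generic $(a,b)$ works'', the open set of parameters depends on $p$, and intersecting over a positive-dimensional family of $p$'s destroys genericity, so you cannot extract the single $\Omega$ the statement requires. What you actually need is weaker --- one horizontal branch over each of the finitely many values $a\in NK_\ity(f)$ --- but your sketch offers for it only the heuristic that the Euler combination $\sum_k x_k\,\partial f_d/\partial x_k=d\,f_d$ lets the leading forms ``see'' $f_d$; no mechanism converts $\d\bt\in\cC^\ity_p$ into an actual branch of $V(g_1,\dots,g_{n-1})$ tending to infinity with $f\to a$. The stratification and conormal dimension count you import from the ``$\subset$'' part of Theorem \ref{t:1} governs slicing by generic \emph{hyperplanes} in the source, i.e.\ the iterated curves $\Gamma(x_i,f_{i-1})$; the super-polar equations are not hyperplane sections, and nothing in that argument transfers. (Your part (1) is also incomplete: the projective intersection of the $n-1$ degree-$\le d$ hypersurfaces has dimension $\ge 1$, but that it is not generically absorbed by $H^\ity$ or by $\overline{\Sing f}$ is asserted, not proved.)

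The paper's proof avoids the conormal picture entirely, and it is worth seeing why. Malgrange's condition is encoded as non-properness: for $\Phi=\bigl(f,\partial f/\partial x_1,\dots,\partial f/\partial x_n,(x_k\,\partial f/\partial x_j)_{j,k}\bigr)$ one has the exact identity $K_\ity(f)=A\cap J_\Phi$ with $A=\bC\times\{0\}$ (equality \eqref{eq:K}), since $\|x_l\|\,\|\d f(x_l)\|\to 0$ with $f(x_l)\to y$ says precisely that $\Phi$ is non-proper over $(y,0)$. The Generalized Noether Lemma (Proposition \ref{noether}), applied to $X=\overline{\Phi(\bC^n)}$, the line $A$, $B=J_\Phi$, and the finitely many points of $NK_\ity(f)\cup(B(x_1,X)\cap A)\setminus f(\Sing f)$, yields a generic projection $\Pi=(x_1,L_1,\dots,L_{n-1})$ with vertical non-properness set, $\Pi(A)\not\subset\Pi(B)$, and finite nonempty fibres over those points; after removing a hypersurface $R$ so that $\Psi=\Pi\circ\Phi$ becomes quasi-finite, Proposition \ref{q-f} shows that $\Gamma'=\Psi'^{-1}(\Pi(A))$ is a curve carrying $\Pi(NK_\ity(f))$ in the non-properness set of $\Psi|_{\Gamma'}$, and unwinding the forms $L_i$ identifies $\overline{\Phi^{-1}(\Pi^{-1}(\Pi(A)))\setminus\Sing f}$ with $\Gamma_f(a,b)$ and $\overline{\Gamma'}$ with $H\Gamma_f(a,b)$: the parameters $(a,b)$ \emph{are} the coefficients of the generic $L_i$, and both the non-emptiness in (a) and the inclusion in (b) fall out of the same construction. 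The genericity is thus spent in the target of $\Phi$, where $NK_\ity(f)$ has become finitely many points on a line, rather than at infinity in the source, where the $t$-singular locus may be positive-dimensional --- this is exactly the device that your route would have to replace by a direct (and currently missing) incidence theorem for the super-polar system.
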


\begin{proof}
Let  $\Phi : \bC^n \to \bC \times
\bC^{n(n+1)}$ be the polynomial mapping defined by:
\[\Phi = \left( f,\frac{\partial f}{\partial x_1},\ldots ,
\frac{\partial f}{\partial x_n}, h_{11} ,h_{12},\ldots ,h_{nn}\right) ,
\]
where $h_{ij}=x_i\frac{\partial f}{\partial x_j},\,
i=1,\ldots,n,\,j=1,\ldots,n$.

Let us observe that $\Phi$ is a
birational mapping (onto its image), in particular it is
generically finite, since $\Phi$ is injective outside the
critical set of $f$.

Let $A:=\bC\times \{
(0,\ldots,0)\}\subset \bC\times \bC^{n(n+1)}$.  
By the definitions of  $K_\infty (f)$ and of $\Phi$, we have the equality:
\begin{equation}\label{eq:K}
  K_\infty (f)=A\cap J_\Phi,
\end{equation}
   where $J_\Phi$ denotes the set of
points at which the mapping $\Phi$ is not proper. Recall that $K_\infty (f)$ is finite, hence  the set $A\cap
J_\Phi$ is finite too.

Let $X:=\overline{\Phi(\bC^n)} \subset \bC\times \bC^{n(n+1)}$ and
$B:=J_\Phi$.  Let $B(x_1,X)$ be
a base-set of non-properness of linear projections of $X$ with
respect to $x_1$ (cf Definition \ref{d:base}).

In the following we identify the target $\bC$ of $f$ with the line $A\subset \bC\times \bC^{n(n+1)}$.

Let then $\{p_1,\ldots ,p_s\}:=NK_\infty(f)\cup (B(x_1,X)\cap
A)\setminus f(\Sing f) \subset A\cap X$. By Proposition
\ref{noether} and using its notations, for general $(L_1, \ldots , L_{n-1}) \in U\subset M^{n-1}_{1+ n(n+1)}$,  the mapping:
  $$\Pi = (x_1,L_1,\dots,L_{n-1}): X\to \Bbb C^{n}$$
satisfies the following conditions:
\begin{enumerate}
\item the fibers of $\Pi$ have dimension at most one,
  \item  there is a  polynomial $\rho\in \Bbb C[t_1]$ such that
$$J_\Pi=\{ (t_1,\dots,t_{n})\in \Bbb C^{n} \mid \rho (t_1)=0\},$$
  \item  $\Pi(A)\not\subset \Pi(B),$
 \item all fibers $\Pi^{-1}(\Pi(p_j)), \ j=1,\ldots, s$ are finite and
non-empty.
\end{enumerate}
Let us write $L_i=c_ix_1+l_i(a,b), \ i=1,\ldots,n-1$, where the linear form
$l_i(a,b)$ does not depend on variable $x_1.$ Note that:
\[\Pi(A)=\{
x \in \bC^n \mid  x_1=t, x_2=c_1t,\ldots, x_n=c_{n-1}t, \ \ t\in \bC\}.\]

 For $\Psi :=\Pi\circ\Phi$, we have (see \eqref{eq:K}):
\[\Pi(K_\infty(f))= \Pi(A  \cap J_\Phi) \  \subset \ \Pi(A)\cap
J_\Psi.\]


Let $V :=\{y\in\bC^n \mid \dim \Psi^{-1}(y)>0\}$. Since the fibers
$\Psi^{-1}(\Pi(p_j))$, $j=1,\ldots, s$, are finite and non-empty we
have $\Pi(p_j)\not\in \overline{V}$ for $j=1,\ldots,s$. So  let $S$ be
a hypersurface in $\bC^n$ which contains $\overline{V}$ but does
not contain the set of points $\{\Pi(p_1),\ldots, \Pi(p_s)\}$ and
let
\[ R :=S\cup  \{ y \in
\bC^n \mid \prod^r_{c\in \Pi(f(\Sing f))} (y_1-c)=0\}. \]

 With these notations, the mapping
\[\Psi': \bC^n\setminus \Psi^{-1}(R) \to \bC^n\setminus R ,\ \
 x \mapsto \Psi(x)\]
 is quasi-finite, and moreover $\Pi(NK_\infty(f))\subset
J_{\Psi'}$.

Let $\Gamma':=\Psi'^{-1}(\Pi(A))$. By Proposition \ref{q-f},
$\Gamma'$ is a curve and $\Pi(NK_\infty(f))$ is contained in the
non-properness set of the mapping $\Psi|_{\Gamma'} : \Gamma'\to
\Pi(A)\setminus R$.
Consequently, the set $\Pi(NK_\infty(f))$ is also
contained in the non properness set of the mapping $\Psi$ restricted
to $\overline{\Psi^{-1}(\Pi(A)\setminus \Pi(f(\Sing f))}$.

By the definition of $\Psi$ we have
$\Psi^{-1}(\Pi(A))=\Phi^{-1}(\Pi^{-1}(\Pi(A)))$, where:
\[ \Pi^{-1}(\Pi(A))=\{x\in X \mid
l_1(a,b)(x_2,\ldots,x_{n(n+1)})=0,\ldots ,l_{n-1}(a,b)(x_2,\ldots ,x_{n(n+1)})=0\}.\]

Comparing to the definition \eqref{eq:super}, we see that the set
$\overline{\Phi^{-1}(\Pi^{-1}(\Pi(A)))\setminus \Sing(f)}$
coincides with the super-polar curve $\Gamma_f(a,b)$.

The set $\Gamma_f(a,b)$ is a curve since it is union of the curve
$\overline{\Gamma'}$, which actually coincide with the horizontal
part $H\Gamma_f(a,b)$, and, eventually, some of the one dimensional
fibers of $\Psi$.

Let us now consider a linear isomorphism:
\[T: \bC^n \to \bC^n, \ \   (x_1,\ldots, x_n) \mapsto (x_1, x_2-c_2x_1,\ldots, x_n-c_nx_1).\]

From the above construction we know that  $\Pi(NK_\infty(f))\subset
J_{\Psi_{|\Gamma'}}$.
We then have the inclusion $T(\Pi(NK_\infty(f)))\subset J_{T\circ\Psi_{|\Gamma'}}.$
But $T\circ
\Psi_{|\Gamma'}$ coincides with $f$ on  $\Gamma' = H\Gamma_f(a,b)$,  and
$T(\Pi(NK_\infty(f)))$ coincides with  $NK_\infty(f).$  This shows the inclusion $NK_\ity(f) \subset  J_f({H\Gamma_f(a,b)})$ and ends the proof of point (b) of our theorem.
\end{proof}

\subsection{Proof of Corollary \ref{c2}} \ \\

 We use the terminology of the above proof. We have actually shown
 that if $NK_\infty(f)\not=\emptyset$ then the curve
 $\overline{\Gamma'}$ is non-empty, and that the set $NK_\infty(f)$ is
 contained in the non-properness set of the restriction
  $f_{|\Gamma'}$. The curve $\overline{\Gamma'}$ is
 a subset of the super-polar curve $\Gamma_f(a,b)$ for general coefficients $a$
 and $b$, and moreover, $f$ is constant on all other components of  $\Gamma_f(a,b)$.
 By the generalized Bezout Theorem we have $\deg \Gamma_f(a,b)\le  d^{n-1} - \sum_{i=1}^r d_i$,
thus $\deg \overline{\Gamma'}\le  d^{n-1} - \sum_{i=1}^r
 d_i$. Note that the cardinality of the non-properness set of  $f_{|\Gamma'}$ is estimated by the number of these points at infinity of a curve
 $\Gamma'$ which are transformed by $f$ into $\C.$
 Consequently,  the cardinality of the non-properness set of  $f_{|\Gamma'}$ is bounded from above
 by  the number $d^{n-1}-1 - \sum_{i=1}^r d_i$. We can substract 1 in this formula since actually each branch of $\overline{\Gamma'}$ intersects the hyperplane at infinity also at the value infinity of $f$.
 Thus we also have $\#NK_\infty(f)\le d^{n-1}-1 - \sum_{i=1}^r d_i$.
 Since every connected positive-dimensional component of the critical set
 $\Sing f$ is contained in one fiber of $f$ thus indicates a trivial non-regular value, we
obtain:
\[\#K_\infty(f)\le d^{n-1}-1 - \sum_{i=1}^r (d_i-1).\]
For $n=2$, it turns out that the Malgrange condition can be
recovered (see \cite{Ha}, \cite{Ha1}, \cite{LO}) by the asymptotic
behavior of the derivatives of $f$ only. We thus consider, instead
of the mapping $\Phi$ of the proof of Theorem
\ref{t:mainsuperpolar}, the new mapping
 $\Phi(x,y)=(f(x,y),\frac{\partial f}{\partial x}, \frac{\partial f}{\partial y})$.
  This mapping is generically finite if
 $NK_\infty(f)\not=\emptyset$.  In this case, arguing as above we get the last inequalities of our Corollary \ref{c2}.

\section{Algorithm}\label{alg}
We present here a fast  algorithm which yields a finite set
$S\subset \C$ such that $NK_\infty(f)\subset S$, for a given
polynomial $f: \C^n\to\C.$  By our results, this problem reduces
to computing the non-properness set of the mapping $f_{|\Gamma}:
\Gamma\to \C$ where $\Gamma$ is a super-polar curve of $f.$

Let us first show how to compute the non-properness set $J_g$
of the mapping $g: X\to \C$, where $X\subset \C^n$ is a curve. The
following result can be found in \cite{gr}:

\begin{theorem}\label{th}
If ${\cal B} = (b_1, \ldots , b_t)$ is the  Gr\"obner basis of the
ideal $I\subset k[x_1, \ldots , x_n]$ with the  lexicographic order in
which $ x_1 > x_2 > \ldots
> x_n$, then for every  $0\le m\le n$ the set ${\cal B} \cap
k[x_{m+1},\ldots , x_n]$ is the   Gr\"obner basis of the ideal
$I \cap k[x_{m+1},\ldots , x_n].$  \fin
\end{theorem}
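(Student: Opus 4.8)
The plan is to establish the classical Elimination Theorem by the standard Gr\"obner-basis argument, reducing everything to the single combinatorial feature that the lexicographic order with $x_1 > \cdots > x_n$ is an \emph{elimination order} for the first $m$ variables. Fix $m$ with $0 \le m \le n$ and write $R := k[x_{m+1},\ldots,x_n]$, $I_m := I \cap R$, and $\mathcal{B}_m := \mathcal{B} \cap R$. Since $I_m$ is the contraction of the ideal $I$ to the subring $R$, it is an ideal of $R$, and the inclusion $\mathcal{B}_m \subset I_m$ is immediate from $\mathcal{B} \subset I$. What remains is to show that $\mathcal{B}_m$ is actually a Gr\"obner basis of $I_m$ with respect to the restricted lexicographic order on $R$, i.e. that for every nonzero $f \in I_m$ the leading term $\mathrm{LT}(f)$ is divisible by $\mathrm{LT}(g)$ for some $g \in \mathcal{B}_m$, which is the usual characterisation of a Gr\"obner basis.

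First I would record the key order-theoretic lemma: in lexicographic order with $x_1 > x_2 > \cdots > x_n$, any monomial in which one of the variables $x_1,\ldots,x_m$ occurs with positive exponent is strictly larger than any monomial lying in $R$. This is the property of lex that makes elimination work, and it is the conceptual crux of the whole argument, even though its verification is a one-line comparison of exponent vectors. I would also note in passing that for $f \in R$ all monomials of $f$ lie in $R$, so $\mathrm{LT}(f)$ is the same whether computed in the full ring or in $R$, making the notation unambiguous.

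Next I would take an arbitrary nonzero $f \in I_m$. Since $f \in I$ and $\mathcal{B}$ is a Gr\"obner basis of $I$, there is $g \in \mathcal{B}$ with $\mathrm{LT}(g) \mid \mathrm{LT}(f)$. Because $f \in R$, the monomial $\mathrm{LT}(f)$ involves only $x_{m+1},\ldots,x_n$, hence so does its divisor $\mathrm{LT}(g)$. Applying the lemma in contrapositive form, I would argue that if $g$ contained any term involving one of $x_1,\ldots,x_m$, that term would be lex-larger than $\mathrm{LT}(g)$, contradicting the definition of the leading term; therefore every term of $g$ lies in $R$, i.e. $g \in \mathcal{B} \cap R = \mathcal{B}_m$. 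This produces the required $g \in \mathcal{B}_m$ with $\mathrm{LT}(g) \mid \mathrm{LT}(f)$, so $\langle \mathrm{LT}(g) : g \in \mathcal{B}_m\rangle = \langle \mathrm{LT}(I_m)\rangle$ and $\mathcal{B}_m$ is a Gr\"obner basis of $I_m$.

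The main obstacle is conceptual rather than technical: one must use the \emph{lexicographic} order in an essential way, precisely through the elimination property isolated in the lemma, since the conclusion fails for general monomial orders. Once that property is in hand the two inclusions are purely formal. I would close by observing that $m$ was arbitrary, so the statement holds for all $0 \le m \le n$, completing the proof.
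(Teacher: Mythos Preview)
Your proof is correct and is the standard textbook argument for the Elimination Theorem. Note, however, that the paper does not supply its own proof of this statement: it is quoted as a known result with a reference to \cite{gr} and closed with a \fin, so there is no in-paper proof to compare against.
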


\begin{corollary}
Consider   the ring $\C [x_1,\ldots ,x_n;y_1,\ldots ,y_m]$. Let $V\subset
\C^n\times\C^m$ be an algebraic set and
      let $p: \C^n\times\C^m\rightarrow \C^m$
denote the projection. Assume that ${\cal B}$ is a Gr\"obner basis
of the ideal $I(V)$ with the lexicographic order. Then ${\cal
B}\cap \C [y_1,\ldots ,y_m]$ is a Gr\"obner basis of the ideal
$I(p(V))$.  
\end{corollary}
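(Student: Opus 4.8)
The plan is to deduce this corollary directly from Theorem~\ref{th} by the standard dictionary between ideals and algebraic sets. The statement asserts that if $\mathcal{B}$ is a Gr\"obner basis of $I(V)$ with respect to the lexicographic order $x_1 > \cdots > x_n > y_1 > \cdots > y_m$, then the subset $\mathcal{B}\cap \C[y_1,\ldots,y_m]$ generates (and is in fact a Gr\"obner basis of) the ideal $I(p(V))$ of the projection. First I would apply Theorem~\ref{th} with the total variable count $n+m$ and the elimination cut taken after the last $x$-variable, so that the eliminated variables are exactly $x_1,\ldots,x_n$ and the surviving ones are $y_1,\ldots,y_m$. This immediately yields that $\mathcal{B}\cap \C[y_1,\ldots,y_m]$ is a Gr\"obner basis of the \emph{elimination ideal} $I(V)\cap \C[y_1,\ldots,y_m]$.

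The remaining task is therefore to identify this elimination ideal with $I(p(V))$. Here I would invoke the classical Closure Theorem of elimination theory: over an algebraically closed field such as $\C$, the vanishing locus of the elimination ideal $I(V)\cap \C[y_1,\ldots,y_m]$ equals the Zariski closure $\overline{p(V)}$ of the projection. Since $p(V)$ and its closure have the same ideal, we obtain $I(p(V)) = I(\overline{p(V)}) = \sqrt{I(V)\cap \C[y_1,\ldots,y_m]}$. The one subtlety to flag is that the elimination ideal need not itself be radical, so strictly speaking $\mathcal{B}\cap \C[y_1,\ldots,y_m]$ is a Gr\"obner basis of the elimination ideal, whose radical is $I(p(V))$; in the intended application $V$ is reduced and one works with radical ideals, so the identification is exact. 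I would state the corollary under the standing assumption that $I(V)$ is radical (as is implicit when writing $I(V)$ for the ideal of an algebraic set), in which case the elimination ideal coincides with $I(p(V))$ on the nose.

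The main obstacle, then, is not computational but conceptual: ensuring the correct matching between the purely algebraic elimination statement of Theorem~\ref{th} and the geometric projection $p(V)$. Concretely, the work lies in verifying that the lexicographic order chosen ranks all the $x_i$ above all the $y_j$, so that Theorem~\ref{th} applied at the index $m = n$ (in its own notation, cutting after the $x$-block) eliminates precisely the fiber variables. Once the orders are aligned and the Closure Theorem is cited, the conclusion is immediate. I would present it in two short steps: (i) apply Theorem~\ref{th} to get the Gr\"obner basis of the elimination ideal, and (ii) apply the Closure Theorem to identify that ideal with $I(p(V))$, noting the radical caveat.
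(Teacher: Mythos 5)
Your proof is correct, and its core is the same as the paper's: apply Theorem~\ref{th} with the lexicographic order ranking $x_1>\cdots>x_n>y_1>\cdots>y_m$, cutting after the $x$-block, to get that ${\cal B}\cap\C[y_1,\ldots,y_m]$ is a Gr\"obner basis of the elimination ideal $I(V)\cap\C[y_1,\ldots,y_m]$. Where you diverge is the identification of that elimination ideal with $I(p(V))$: you route it through the Closure Theorem and the Nullstellensatz, obtaining $I(p(V))=\sqrt{I(V)\cap\C[y_1,\ldots,y_m]}$ and then arguing away the radical. The paper instead observes that the equality $I(p(V))=I(V)\cap\C[y_1,\ldots,y_m]$ is tautological: a polynomial $f\in\C[y_1,\ldots,y_m]$, viewed in $\C[x_1,\ldots,x_n;y_1,\ldots,y_m]$ as not involving the $x_i$, satisfies $f(v)=f(p(v))$ for every $v\in V$, so $f$ vanishes on the set-theoretic image $p(V)$ if and only if it vanishes on $V$, i.e.\ if and only if $f\in I(V)$; since $I(S)=I(\overline{S})$ for any set $S$, neither closedness of $p(V)$ nor the Nullstellensatz enters. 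Your radical caveat is moreover a non-issue even along your route: $I(V)$ is radical by definition of the vanishing ideal, and the elimination ideal of a radical ideal is automatically radical (if $f^k\in I(V)\cap\C[y_1,\ldots,y_m]$ then $f\in I(V)$ and $f\in\C[y_1,\ldots,y_m]$) --- a one-line fact you assert but leave implicit. The heavier machinery buys nothing here; it would become essential only if the corollary were stated for ${\cal B}$ a basis of an arbitrary (possibly non-radical) ideal $I$ with $V=V(I)$, and the paper's formulation in terms of $I(V)$ is exactly what makes the one-line observation suffice.
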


\begin{proof} Observe that $I(p(V))=I(V)\cap \C [y_1,\ldots , y_m]$
and then  use Theorem \ref{th}.
\end{proof}

Let then $I(X) :=(h_1,\ldots , h_r)$ be the ideal of our curve
$X$. The graph $G\subset \C^n\times \C$ of the non-constant mapping $f: X\to \C$ is given by the
ideal $I=(h_i=0,\ i=1,\ldots , r;\ f(x)-z)\subset \C[x_1,\ldots ,
x_n, z].$

Let $O$ be the order in $\C[x_1,\ldots , x_n, z]$ such that
$x_1>x_2>\ldots  > x_i>x_{i+1}> \ldots  > x_n>z$ . Let ${\cal B}$ denote the
Gr\"obner basis of $I$ with respect to the order $O.$
  Let
$f_i\in {\cal B}\cap \C[x_i,z]$ be a non-zero polynomial which depends on $x_i$. Then:
\[f_i=x_i^{n_i}a_0^i(z)+x_i^{n_i-1}a_1^i(z)+\ldots +a_{n_i}^i(z).\]
By  \cite[Prop. 7]{jel}, \cite[Th. 3.10]{jel1},  for our
mapping $f: X\to \C$ we have:

\[J_f=\bigcup^n_{i=1} \{z\in \C \mid a_0^i(z)=0\}.\]

\medskip

With this preparation, we  now state the algorithm:\\

\noindent
{\em Special case}:  $\Sing(f)$ is a finite set.

\bigskip

\noindent
INPUT: the polynomial $f:\C^n\to\C$

\begin{itemize}
\item[(1)]
choose random coefficients $\alpha_i^k, \alpha_{ij}^k, \
k=1,\ldots , n-1;  \ i,j=1,\ldots , n$.

\item[(2)]  put $g_k=\sum_j \alpha_{j}^k\frac{\partial f}{\partial
x_j}+\sum_{i,j} \alpha_{ij}^k x_i \frac{\partial f}{\partial
x_j}.$

\item[(3)]  put $W:=(g_1,..., g_{n-1})\subset\C[x_1,...,x_n]$, if dim $W>1$
then go back to (1).

\item[(4)] compute a Gr\"obner basis ${\cal B}$ of the ideal
$I=(g_1,\ldots , g_{n-1}, f-z)\subset \C[x_1,\ldots , x_n,z]$ with
respect to order $O$ (as defined above).

\item[(5)]  let $f_i=x_i^{n_i}a_0^i(z)+x_i^{n_i-1}a_1^i(z)+\ldots +a_{n_i}^i(z)\in
{\cal B}_i\cap \C[x_i,z]$  be a non zero polynomial which depends on $x_i$.

\item[(6)] let $S :=\bigcup^n_{i=1} \{z\in \C \mid a_0^i(z)=0\}$.  The set $S$
is the non-properness set of the mapping $f$ restricted to $\{
g_1=0,\ldots , g_{n-1}=0\}$).
\end{itemize}

\noindent
OUTPUT: a finite set $S\subset \C$ such that $NK_\infty(f)\subset
S.$

\bigskip

\noindent In the general case, in order to grip the super-polar curve,  we have to remove from the set
$\{g_1=0,\ldots, g_{n-1}=0\}$ the singular set $\Sing(f).$ To do this, it is
enough to remove the hypersurface $\{ \sum \beta_j \frac{\partial
f}{\partial x_j}=0\}$, where the coefficients $\beta_j$ are
sufficiently general. Indeed such a hypersurface does contain
$\Sing(f)$ but does not contain any component of $\Gamma(a,b).$

\bigskip

\noindent
{\em General  case}:

\bigskip

INPUT: the polynomial $f:\C^n\to\C$

\begin{itemize}
\item[(1)]
 choose random coefficients $\alpha_i^k, \alpha_{ij}^k, \beta_i,
\ k=1,..., n-1,\ i,j=1,\ldots ,n$.

\item[(2)] put $g_k=\sum_j \alpha_{j}^k\frac{\partial f}{\partial
x_j}+\sum_{i,j} \alpha_{ij}^k x_i \frac{\partial f}{\partial
x_j}.$

\item[(3)] put $h=\sum^n_{j=1} \beta_j \frac{\partial f}{\partial x_j}.$

\item[(4)] put  $W:=(g_1,\ldots , g_{n-1}, th-1)\subset \C[t, x_1,\ldots , x_n]$;
if dim $W>1$, then go back to (1).

\item[(5)] compute a Gr\"obner basis ${\cal B}$ of the ideal $I=(th-1,
g_1,\ldots , g_{n-1}, f-z)\subset \C[t,x_1,\ldots , x_n,z]$ with
respect to the order $O$   such that $t>x_1>x_2>\ldots
>\hat x_i>x_{i+1}> \cdots  > x_n>>z$.

\item[(6)] let $f_i=x_i^{n_{i}}a_0^{i}(z)+x_i^{n_i-1}a_1^{i}(z)+\cdots
+a_{n_{l}}^{i}(z)\in {\cal B}\cap \C[x_i,z]$ be a non zero
polynomial which depends on $x_i$.

\item[(7)] let $S=\bigcup^n_{i=1} \{z\in \C \mid a_0^{i}(z)=0\}$. Here $S$
is the non-properness set of the mapping $f$ restricted to $\{
g_1=0,\ldots , g_{n-1}=0\}\setminus \{h=0\}$.
\end{itemize}

OUTPUT: a finite set $S\subset \C$ such that $NK_\infty(f)\subset
S.$

\begin{remark}
The above algorithm is probabilistic (without certification),
hence for really random coefficients $\alpha$ and $\beta$ it gives
a good subset $S(\alpha, \beta)$, but for some choices it can produce a bad answer.
However generically it produces subsets $S(\alpha, \beta)$ which
contains  $NK_\infty(f)$ Therefore in practice we must
repeat the algorithm several times and select only the subset
$S(\alpha,\beta)$
 which contains the same fixed subset all times. The final answer should then be the intersection
 $S :=\bigcap_{\alpha, \beta} S(\alpha,\beta).$

 At step (5) (and (4) in the isolated singularity case, respectively) we
compute Gr\"obner bases in polynomial rings of at most $n+2$
variables.

It is possible to construct also a version of this
algorithm with a certification, however in that case  we have to
compute Gr\"obner bases in polynomial rings of  $2n+1$
variables.
\end{remark}

\begin{remark}
  A similar algorithm can be constructed for
the iterated polar curves method that we use in the first part of
our paper; more steps will be needed. We leave the details to the
reader.
\end{remark}





\begin{thebibliography}{MMM}


\bibitem[DRT]{DRT}
L.R.G. Dias, M.A.S. Ruas, M.~Tib\u ar,
\emph{Regularity at infinity of real mappings and a {M}orse-{S}ard theorem.}
J. Topol. 5 (2012), no 2, 323--340.

\bibitem[Du]{Du}
Durfee, Alan H. Five definitions of critical point at infinity. Singularities (Oberwolfach, 1996),
 345--360, Progr. Math., 162, Birkh\"user, Basel, 1998.

\bibitem[Fe]{Fe}
 M.V. Fedoryuk, {\em The asymptotics of the Fourier transform of the exponential
function of a polynomial}, Docl. Acad. Nauk 227 (1976), 580--583; Soviet
Math. Dokl. (2)  17 (1976), 486--490.

\bibitem[Gw]{gw}
J. Gwo\'zdziewicz, \emph{Ephraim's pencils},  Int. Math. Res. Notices 15 (2013),  3371--3385.


\bibitem[Ha1]{Ha} H\`a Huy Vui,  \emph{Sur la fibration globale des polyn\^omes de deux variables complexes.}  C. R. Acad. Sci. Paris S\'er. I Math. 309 (1989), no. 4, 231--234.

\bibitem[Ha2]{Ha1} H\`a Huy Vui, \emph{Nombres de Lojasiewicz et singularit\'es à l'infini des polyn\^omes de deux variables complexes.}  C. R. Acad. Sci. Paris S\'er. I Math. 311 (1990), no. 7, 429--432.




\bibitem[Jel1]{jel}  Z. Jelonek, \emph{The  set  of  points  at  which  a
polynomial map is not proper},  Ann.  Polon.  Math.  58 (1993),
259--266.

\bibitem[Jel2]{jel1} Z. Jelonek,  \emph{Testing  sets  for  properness  of
polynomial mappings}, Math. Ann. 315 (1999), 1--35.



\bibitem[Jel3]{jel3} Z. Jelonek,  \emph{On the \L ojasiewicz exponent}, Hokkaido Journal of
Math. 35 (2006), 471--485.

\bibitem[Jel4]{jel4} Z. Jelonek,  \emph{On asymptotic critical  values and the Rabier theorem},
Banach Center Publications 65 (2005), 125--133.

\bibitem[Jel5]{jel5} Z. Jelonek,  \emph{On the effective Nullstellensatz}, Inventiones Mathematicae
162 (2005), 1--17.

\bibitem[Jel6]{jel6} \emph{On bifurcation points  of a complex polynomial},  Proc. AMS.
131, (2003), 1361 - 1367.


\bibitem[JK1]{JK-crelle}  Z. Jelonek, K. Kurdyka,
 \emph{On asymptotic critical values of a complex polynomial.} J. Reine Angew. Math. 565 (2003),
1--11.


\bibitem[JK2]{JK}  Z. Jelonek, K. Kurdyka, {\em Reaching generalized critical values of a polynomial}, Math. Z. 276 (2014), no. 1-2, 557--570.



\bibitem[JK3]{j-k2}  Z. Jelonek, K. Kurdyka, \emph {Quantitative Generalized Bertini-Sard
Theorem for smooth affine varieties}, {Discrete and Computational
Geometry 34},  (2005), 659--678.

\bibitem[JT]{JT}
Z. Jelonek, M. Tib\u ar,  \emph{Bifurcation locus and branches at infinity of a polynomial $f:\Bbb{C}^2\rightarrow\Bbb{C}$}. Math. Ann. 361 (2015), no. 3-4, 1049--1054.

\bibitem[LO]{LO}
 Le Van Thanh,  M. Oka, \emph{Note on estimation of the number of the critical values at infinity.} Kodai Math. J. 17 (1994), no. 3, 409--419.

 \bibitem[Pa1]{Pa-m}
A. Parusi\'nski,  {\em A note on singularities at infinity of complex polynomials},
in: ``Simplectic singularities and geometry of gauge fields", Banach Center Publ.
vol. 39 (1997), 131--141.

\bibitem[Pa2]{par1} A. Parusi\'nski,  {\em  On the bifurcation set of complex
polynomial with isolated singularities at infinity},
Compositio Math.  97 (1995), 369--384.


\bibitem[PP]{gr} F. Pauer, M.  Pfeifhofer, The theory   of Gr\"obner basis,
{\em L'Enseignement Mathematique}  34 (1988), 215--232.



 \bibitem[Ph]{Ph}
 F. Pham, {\em Vanishing homologies and the $n$ variable saddlepoint method},
Arcata Proc. of Symp. in Pure Math., vol.  40, II  (1983), 319--333.

\bibitem[Sa]{sa} M. Safey El Din, {\em Testing sign conditions on a multivariate
polynomial and applications}, Math. Comput. Sci. 1 (2007), no. 1, 177--207.

\bibitem[ST]{ST}
 D. Siersma, M. Tib\u ar,  {\em Singularities at infinity and their vanishing cycles},
 Duke Math. Journal  80:3 (1995), 771--783.

  \bibitem[Th]{T}
 R. Thom,  {\em Ensembles et morphismes stratifi\' es}, Bull. Amer. Math. Soc.
75 (1969), 249--312.

\bibitem[Ti1]{Ti-cras}
M. Tib\u ar,  \emph{On the monodromy fibration of polynomial functions with singularities at infinity.} C. R. Acad. Sci. Paris S\'er. I Math. 324 (1997), no. 9, 1031--1035.


\bibitem[Ti2]{Ti-top}
M. Tib\u ar,  {\it Topology at infinity of polynomial
mappings and Thom condition},  Compositio Math. 111 (1998), 89--109.

\bibitem[Ti3]{Ti-asymp}
M. Tib\u ar, \emph{Asymptotic equisingularity and topology of complex hypersurfaces}. Internat. Math. Res. Notices 1998, no. 18, 979--990.

\bibitem[Ti4]{Ti-reg}
M. Tib\u ar, {\it Regularity at infinity of real and complex polynomial maps},
  Singularity Theory, The C.T.C Wall Anniversary Volume,
 LMS Lecture Notes Series 263 (1999), 249--264. Cambridge
University Press.

\bibitem[Ti5]{Ti-book}
M.~Tib\u{a}r,  Polynomials and vanishing cycles. Cambridge Tracts in Mathematics, 170. Cambridge University Press, Cambridge, 2007.


\bibitem[Ve]{Ve}
  J.-L. Verdier, {\em Stratifications de Whitney et th\'
eor\`eme de Bertini-Sard}, Inventiones Math. 36 (1976), 295--312.

\end{thebibliography}
\end{document}